\tikzset{different1/.style={decorate, decoration=border}}
\renewcommand{\epsilon}{{\varepsilon}}        
\renewcommand{\phi}{{\varphi}}
\newcommand{\C}{\mathds{C}}
\newcommand{\R}{\mathds{R}}
\newcommand{\N}{\mathds{N}}
\newcommand{\rIm}{\mathop{\mathrm{Im}}}
\newcommand{\rRe}{\mathop{\mathrm{Re}}}
\newcommand{\tr}{\mathrm{tr}}
\newcommand{\E}{\mathds{E}}
\renewcommand{\P}{\mathds{P}}
\newcommand{\kap}[1]{\kappa^{(#1)}}
\newcommand{\re}{\mathrm{e}}
\newtheorem{theorem}{Theorem}    
\newtheorem{lemma}[theorem]{Lemma}             
\newtheorem{definition}[theorem]{Definition}
\newtheoremstyle{mystyle}
  {}
  {}
  {\itshape}
  {}
  {\bfseries}
  {}
  { }
  {\thmname{#1}\thmnumber{ #2}\thmnote{ (#3)}}
\theoremstyle{mystyle}
\newtheorem*{assumption}{Assumption}
\newtheoremstyle{myrem}
  {}
  {}
  {}
  {}
  {\bfseries}
  {.}
  { }
  {\thmname{#1}\thmnumber{ #2}\thmnote{ (#3)}}
\theoremstyle{myrem}
\newtheorem{example}[theorem]{Example}
\newtheorem*{crule}{Counting Rule}
\newtheorem*{remark}{Remark}
\newcounter{assumptions}
\newcommand{\labelA}[1]{\refstepcounter{assumptions}\label{#1}}
\newcounter{rules}
\newcommand{\labelR}[1]{\refstepcounter{rules}\label{#1}}
\providecommand{\keywords}[1]{\textbf{Keywords: }{#1}.}
\providecommand{\AMSclass}[1]{\textbf{AMS Subject Classification (2020): }{#1}.}
\begin{document}
\setcounter{page}{0} \thispagestyle{empty}
\title{\vspace{-2cm}On the operator norm of a Hermitian random matrix with correlated entries}
\author{Jana Reker, IST Austria}

\maketitle
\date
\begin{abstract}
We consider a correlated $N\times N$ Hermitian random matrix with a polynomially decaying metric correlation structure. By calculating the trace of the moments of the matrix and using the summable decay of the cumulants, we show that its operator norm is stochastically dominated by one.
\end{abstract}

\AMSclass{60B20}\\
\keywords{Correlated random matrix, operator norm, polynomially decaying metric correlation structure}

\setcounter{page}{1}
\vspace{-2mm}
\section{Introduction}
Let $H$ be a Hermitian $N\times N$ random matrix such that $H=\smash{\frac{1}{\sqrt{N}}}W$, where $W\in\C^{N\times N}$ has matrix elements of order one. For Wigner matrices, i.e., when the entries of $W$ are identically distributed and independent (up to the Hermitian symmetry) with some mild moment condition, it is well-known that $\| H\|$ is bounded uniformly in $N$ with very high probability. In fact, it even converges to 2 under the normalization $\E |W_{ij}|^2=1$ (see~\cite{BaiYin1988} and~\cite[Thm~2.1.22]{AndersonGuionnetZeitouni2009}, as well as~\cite{FuerediKomlos1981},~\cite{Juhasz1981},~\cite{Vu2007} for more quantitative bounds under stronger moment conditions). Similar statements hold true for Wigner-type ensembles which allow different distributions of the matrix entries and a more general variance profile, but retain independence up to the Hermitian symmetry. Here, the operator norm converges to the maximum of the support of the asymptotic density of states (see~\cite{ErdosMuehlbacher2019},~\cite{Ottolini2017}), which, although possibly different from~2, is deterministic as well. In contrast, if the entries of~$W$ are very strongly correlated, the norm of $H$ may be as large as~$\sqrt{N}$. Considering dependent random variables furher adds the technical difficulty of tracking the correlations between the matrix entries explicitly throughout the analysis. The spectral properties of correlated random matrices have been studied through global laws (see, e.g.,~\cite{AndersonZeitouni2008},~\cite{BannaMerlevedePeligrad2015},~\cite{BoutetdeMonvelKhorunzhyVasilchuk1996},~\cite{HachemLoubatonNajim2005},~\cite{RashidiFarOrabyBrycSpeicher2008},~\cite{SchenkerSchulzBaldes2005} for the discussion of different models and correlation structures), local laws and universality results~(see~\cite{AdhikariChe2019},~\cite{AEKS2020},\cite{ErdosKruegerSchroeder2017}).

\medskip
In this paper, we assume the entries of~$W$ to be correlated following a \emph{polynomially decaying metric correlation structure} as, e.g., considered in~\cite{ErdosKruegerSchroeder2017}, but with a weaker, merely summable correlation decay. This dependence structure is characterized by the 2-cumulants of the matrix elements~$W_{ij}$ decaying at least as an inverse $2+\epsilon$ power of the distance with respect to a natural metric on the index pairs $(i,j)$. The higher cumulants follow a similar pattern~(see Assumption~\eqref{A3} below). Under these mild decay conditions, we show that $\| H\|$ is essentially bounded with very high probability.

\medskip
This result was already stated in~\cite{ErdosKruegerSchroeder2017}, indicating that an extension of Wigner’s moment method applies. In the current paper, we carry out this task which turns out to be rather involved. In~\cite{ErdosKruegerSchroeder2017}, this bound was used as an apriori control on $\| H\|$ for the resolvent method, leading to optimal local laws for~$H$. We remark that it is possible the modify the proof in~\cite{ErdosKruegerSchroeder2017} to obtain the bound on $\| H \|$ directly, i.e., without relying on the current paper. However, an independent proof via the moment method has several advantages. First, it is conceptually much simpler and less technical than the resolvent approach in  \cite{ErdosKruegerSchroeder2017}. Further, it only requires the summability of the 2-cumulants (see exponent $s>2$ in~\eqref{A3-2-cumu} below), while \cite{ErdosKruegerSchroeder2017} assumed a faster decay ($s>12$, see~\cite[Eq.~(3a)]{ErdosKruegerSchroeder2017}). Lastly, the current method can be generalized to even weaker correlation decays, resulting in correlated random matrix ensembles whose norm grows with~$N$ but slower than the trivial~$\sqrt{N}$ bound.

\medskip
We start by giving some general notation and the precise assumptions on the matrix $W$ in Section~\ref{sect-notation} below. The bound on the operator norm of $H$ is then formulated in Theorem~\ref{main} and its proof is given in Sections~\ref{sect-2-cumu-estimates} and~\ref{sect-hi-cumu-estimates}. For simplicity, the argument is carried out only for symmetric $H\in\R^{N\times N}$. The Hermitian case follows analogously and is hence omitted.

\medskip
\textbf{Acknowledgment:}
I am very grateful to László Erdős for suggesting the topic and supervising my work on this project. Many thanks also to Lorenz Hübel for pointing out two mistyped estimates in the published version of this article. Partially supported by ERC Advanced Grant "RMTBeyond" No.~101020331.

\vspace{-2mm}
\subsection{Notation and Assumptions on the Model}\label{sect-notation}
Throughout the paper, boldface indicates vectors $\mathbf{x}\in\C^N$ and their Euclidean norm is denoted by $\|\mathbf{x}\|_2$. Further, the operator norm of a matrix~$A\in\C^{N\times N}$ is denoted by~$\|A\|$. In the estimates, $C$ (without subscript) denotes a generic constant the value of which may change from line to line. We note the following assumptions on the matrix $W$.

\begin{assumption}[A1]\labelA{A1}
$\E W_{i,j}=0$ for all $i,j=1,\dots,N$.
\end{assumption}
\begin{assumption}[A2]\labelA{A2}
For all $q\in\N$ there exists a constant $\mu_q$ such that $\E|W_{i,j}|^q\leq\mu_q$ for all~$i,j=1,\dots,N$.
\end{assumption}

The assumption on the correlation decay is given in terms of the multivariate cumulants~$\kap{k}$ of the matrix elements.
\begin{definition}[Cumulants]
Let $\mathbf{w}=(w_1,\dots,w_n)$ be a random vector taking values in $\R^n$. The \textbf{cumulants}~$\kappa_m$ of $\mathbf{w}$ are defined as the Taylor coefficients of the log-characteristic function of~$\mathbf{w}$, i.e.,
\begin{displaymath}
\ln\E[\re^{i\mathbf{t}\cdot\mathbf{w}}]=\sum_m\kappa_m\frac{(i\mathbf{t})^m}{m!},
\end{displaymath}
where the sum is taken over all multi-indices $m=(m_1,\dots,m_n)\in\N^n$ and $m!=\smash{\prod_{j=1}^n(m_j!)}$. For a multiset $B\subset\{1,\dots,n\}$ with~${|B|=k}$, we also write~$\kap{k}(w_j|j\in B)$ instead of $\kappa_m$, where $m_i$ is the multiplicity of $i\in B$.
\end{definition}

The cumulants of $\mathbf{w}$ satisfy the moment-cumulant relation
\begin{equation}
\E(w_1\dots w_n)=\sum_{\pi\in\Pi_n}\prod_{B\in\pi}\kap{|B|}(w_j|j\in B)\label{moment-cumulant-relation},
\end{equation}
where $\Pi_n$ denotes the set of partitions of $\{1,\dots,n\}$ (see, e.g.,~\cite{MingoSpeicher2017}). The complex cumulants arising whenever $\mathbf{w}$ takes values in $\C^n$ can be reduced to the above definition by considering the real and imaginary part of the random variables separately. In particular, the complex~$m$-cumulant of $w_{j_1},\dots,w_{j_m}$ is uniquely determined as a linear combination of the $m$-cumulants of the $2m$ random variables $\rRe(w_{j_1}),\rIm(w_{j_1}),\dots,\rRe(w_{j_m}),\rIm(w_{j_m})$. Writing $w_j=x_j+iy_j$ with $j=1,2$, we have,~{e.g.,}
\begin{displaymath}
\kap{2}(w_1,w_2)=\kap{2}(x_1,x_2)+i\kap{2}(x_1,y_2)-i\kap{2}(y_1,x_2)-\kap{2}(y_1,y_2).
\end{displaymath}
To keep the notation short, we usually view the cumulants as a function of the indices of the matrix elements by identifying~$\kap{k}(W_{a_1,a_2},\dots)$ with $\kap{k}(a_1a_2,\dots)$ or~$\kap{k}(\alpha_1,\dots,\alpha_k)$ using $\alpha_1,\dots,\alpha_k\in\{1,\dots,N\}^2$. Further, $d$ denotes the Euclidean distance on $\{1,\dots,N\}^2$ modulo the (Hermitian) symmetry,~i.e.,
\begin{displaymath}
d(a_1a_2,a_3a_4):=\min\{|a_1-a_3|+|a_2-a_4|,|a_1-a_4|+|a_2-a_3|\}.
\end{displaymath}
In this notation, the conditions from the polynomially decaying metric correlation structure can be formulated as follows.
\begin{assumption}[A3]\labelA{A3}
Whenever $W$ is a real random matrix, the $k$-cumulants~$\kap{k}$ of the elements of $W$ satisfy
\begin{align}
|\kap{2}(a_1a_2,a_3a_4)|&\leq\frac{C_{\kappa,2}}{1+d(a_1a_2,a_3a_4)^s},\label{A3-2-cumu}\\
|\kap{k}(\alpha_1,\dots,\alpha_k)|&\leq C_{\kappa,k}\prod_{e\in T_{min}}|\kap{2}(e)|,\ k\geq3,\label{A3-k-cumu}
\end{align}
for $s>2$ and some constants $C_{\kappa,k}>0$ for $k\geq2$. Here, $T_{min}$ is the (any) minimal spanning tree\footnote{A spanning tree is a graph that connects all the vertices without cycles. Any spanning tree with the lowest possible sum of edge weights is called minimal.} on the complete graph on $k$ vertices labelled by $\alpha_1,\dots,\alpha_k$ with edge weights $d(\alpha_i,\alpha_j)$. If~$W$ is a complex random matrix, we assume the above conditions for the real and imaginary part of $W$ separately in the sense that $\kap{k}(M_{\alpha_1},\dots,M_{\alpha_k})$ satisfies~\eqref{A3-2-cumu} or~\eqref{A3-k-cumu}, respectively, for any combination of $M_{\alpha}\in\{\rRe(W_{\alpha}),\rIm(W_{\alpha})\}$.
\end{assumption}
Note that a correlation decay of the form~\eqref{A3-k-cumu} arises in different statistical physics models~(see~\cite{DuneauIagolnitzerSouillard1973}).

\vspace{-2mm}
\subsection{Statement of the Main Result} 
With the notation established, we give the statement on the operator norm of $H$ as follows.
\begin{theorem}\label{main}
Under the assumptions \eqref{A1}-\eqref{A3}, we have that for all~${\epsilon>0}$,~${D>0}$ there exists a suitable constant~$C(\epsilon,D)$ such that, for all $N\in\N$,
\begin{equation*}
\P\big(\|H\|>N^{\epsilon}\big)\leq C(\epsilon,D)N^{-D}.
\end{equation*}
\end{theorem}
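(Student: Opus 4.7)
The proof follows the classical moment method of Wigner, adapted to accommodate the weak polynomial correlation structure~\eqref{A3}. By Markov's inequality applied to $\tr H^{2p}$ with $p=p(\epsilon,D)\in\N$ chosen sufficiently large (roughly $p\sim D/\epsilon$),
$$\P\bigl(\|H\|>N^\epsilon\bigr)\leq\P\bigl(\tr H^{2p}>N^{2p\epsilon}\bigr)\leq N^{-2p\epsilon}\,\E[\tr H^{2p}],$$
so that the whole statement reduces to a moment bound of the form $\E[\tr H^{2p}]\leq K_p\cdot N$ with a constant $K_p$ depending on $p$ but not on $N$.

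The starting point is the cyclic expansion
$$\E[\tr H^{2p}]=N^{-p}\sum_{i_1,\dots,i_{2p}=1}^{N}\E\bigl[W_{i_1 i_2}W_{i_2 i_3}\cdots W_{i_{2p}i_1}\bigr]$$
combined with the moment-cumulant formula: each expectation expands as a sum over set partitions $\pi$ of $\{1,\dots,2p\}$ in which every block $B\in\pi$ contributes a joint cumulant $\kap{|B|}$ of the matrix entries indexed by $B$. The estimate then splits naturally into the contribution of pair partitions (all blocks of size two) and the contribution of partitions containing at least one block of size $k\geq3$.

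I would first treat the pair-partition part, which closely mirrors the Wigner case. For every pair, the 2-cumulant bound~\eqref{A3-2-cumu} together with the crucial exponent $s>2$ provides summability on $\{1,\dots,N\}^2$, namely $\sum_{\beta}|\kap{2}(\alpha,\beta)|\leq C$ uniformly in $\alpha$. This allows one to \emph{integrate out} one of the two index pairs of every pair at cost $O(1)$. Classifying the pair partitions by the topology of the induced graph on the indices then yields the usual Catalan-type enumeration: non-crossing pairings leave exactly $p+1$ free indices and furnish the leading contribution $N\cdot C^p$, while crossing pairings leave strictly fewer free indices and are therefore subleading.

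The main technical burden is controlling contributions from partitions with blocks of size $k\geq3$. Here I would invoke~\eqref{A3-k-cumu} to dominate every $k$-cumulant by the product of the $k-1$ factors $|\kap{2}|$ associated to the edges of the minimal spanning tree. After this reduction every term is again a product of summable 2-cumulants, but the combinatorial bookkeeping becomes considerably more delicate: one must simultaneously enumerate set partitions of $\{1,\dots,2p\}$, spanning trees inside each block, and the constraints these structures impose on the cyclic index walk $i_1\to i_2\to\cdots\to i_{2p}\to i_1$. The key point, and the main obstacle, is to verify that regardless of the partition structure the number of free index summations never exceeds $p+1$, so that the $N^{-p}$ prefactor is exactly absorbed and only an overall combinatorial price depending on $p$ but not on $N$ survives. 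Carrying out this counting carefully is what I expect to form the technical heart of the proof.
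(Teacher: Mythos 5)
Your overall architecture --- Markov's inequality applied to $\E[\tr H^{2p}]$, the cumulant expansion over set partitions, and the separate treatment of pair partitions versus blocks of size $k\geq3$ via the spanning-tree domination \eqref{A3-k-cumu} --- coincides with the paper's. The gap lies in the one concrete mechanism you propose for the pair-partition part, and in the ``combinatorial bookkeeping'' you explicitly defer, which is precisely where the difficulty sits. Your claim that summability of the 2-cumulants lets you integrate out one of the two index pairs of every pair at cost $O(1)$ is not correct as stated: the index pairs along the cyclic walk are not free but share indices with their neighbours, and for an \emph{internal} index (one appearing in only a single cumulant, e.g.\ in $\kap{2}(a_1a_2,a_2a_3)$, which is exactly the configuration produced by non-crossing pairings) the sum $\sum_{a_2}|\kap{2}(a_1a_2,a_2a_3)|$ can be of order $N$ when $a_1=a_3$, since the metric $d$ degenerates there. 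So the naive $O(1)$-per-pair count breaks down precisely on the leading, non-crossing terms, and with it your assertion that these contribute $N\cdot C^p$ by entrywise summation. The paper repairs this by resumming internal indices into matrices $T$ and $T^{[j]}$ (see \eqref{def-T} and \eqref{def-B}), proving \emph{operator-norm} bounds $\|T^{[j]}\|\leq CN^j$ rather than entrywise bounds, and feeding the $\ell^2$-normalized rows of these matrices back into the remaining cumulants via Lemma~\ref{ext-red-rule-2-cumu}; the leading non-crossing contribution is ultimately controlled as $|\tr(M)|\leq N\|M\|\leq CN^{k/2+1}$, not by summing absolute values pair by pair.

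For the higher cumulants your plan (dominate by spanning-tree products of 2-cumulants and redo the counting) also needs more than you indicate: after the internal-index resummation, leftover matrix elements can sit adjacent to a 3-cumulant or a higher cumulant (the type 1, 2 and 3 configurations of Fig.~\ref{fig5}), and estimating such a matrix element trivially by its norm costs an extra factor $\sqrt{N}$ that must be recovered from bounds on the cumulant that are strictly better than \eqref{CR} (Lemmas~\ref{red-rule-3-cumu} and~\ref{ext-red-rule-3-cumu}); for blocks of size four and higher the paper splits the product into factors $F$ and $G$ and uses that each such block consumes at least four distinct walk indices, so that the uniform $N^2$ bound of Lemma~\ref{est-high-cumu} suffices. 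None of this follows automatically from the spanning-tree domination alone. In short, your outline points in the right direction and matches the paper's high-level strategy, but the step you yourself identify as the technical heart is missing, and the one quantitative step you do make explicit fails exactly on the leading terms.
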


\begin{remark}
In~\cite{ErdosKruegerSchroeder2017}, the bound of Theorem~\ref{main} (for $s>12$) is used as a priori control for the operator norm in proving optimal local laws for random matrices with slow correlation decay. As a consequence of these results, the eigenvalues of $H$ are asymptotically confined to the support of the self-consistent density of states with high probability. This improves upon the above stochastic domination bound by replacing the $N^\epsilon$ factor with a large constant~(see~\cite[Cor.~2.3]{ErdosKruegerSchroeder2017}).
\end{remark}

\vspace{-2mm}
\subsection{Setup for the Moment Method}
We show that the assumptions \eqref{A1}-\eqref{A3} imply that
\begin{equation}\label{est-by-const}
\E[\tfrac{1}{N}\tr(H^k)]\leq C(k),\quad \forall k\in\N,
\end{equation}
from which the statement of Theorem~\ref{main} follows by an application of Chebyshev's inequality. As $H$ is Hermitian, we have $\|H\|^k\leq\tr(H^k)$ for all even $k\in\N$. This implies
\begin{align*}
\P(\|H\|>N^{\epsilon})\leq\frac{\E[\|H\|^k]}{N^{k\epsilon}}\leq\frac{N\E[\tfrac{1}{N}\tr(H^k)]}{N^{k\epsilon}}\leq\frac{NC(k)}{N^{k\epsilon}}
\end{align*}
and thus gives the desired bound if $k$ is chosen large enough.

\medskip
Expanding the term on the left-hand side of~\eqref{est-by-const} using the relation~\eqref{moment-cumulant-relation} yields
\begin{align}
\E[\tfrac{1}{N}\tr(H^k)]&=\frac{1}{N}\sum_{a_1,\dots,a_k}\E[H_{a_1,a_2}H_{a_2,a_3}\dots H_{a_k,a_1}]\nonumber\\
&=N^{-k/2-1}\sum_{\pi\in\Pi_k}\sum_{a_1,\dots,a_k}\prod_{B\in\pi}\kap{|B|}(a_ja_{j+1}|j\in B),\label{cumu-expansion}
\end{align}
where $\Pi_k$ denotes the set of partitions of $\{1,\dots,k\}$ and the index $j+1$ is to be interpreted $\mathrm{mod}\ k$, i.e., if $j=k$, then $a_ja_{j+1}=a_ka_1$. Observe that all terms involving~1-cumulants vanish due to \eqref{A1}. Hence, one can restrict the sum to partitions $\pi$ without singleton sets. The cumulant expansion~\eqref{cumu-expansion} is the main difference between the real symmetric and complex Hermitian case, as considering $H\in\C^{N\times N}$ requires replacing the cumulants by their complex counterparts. However, one can always reduce the argument to the real case by considering the real and imaginary parts of the random variables separately. Thus, from now on we assume $H$ to be real. We develop the estimates by first deriving suitable bounds for the products that only involve 2-cumulants, which include the leading terms, and then successively incorporating higher-order cumulants. The fast correlation decay from Assumption~\eqref{A3} implies that roughly half of the summations over the indices~$a_1,\dots,a_k$ yield a factor $N$, while the other half can be summed up with an~$N$-independent bound. To quantify this behavior, we introduce the following counting rule, which constitutes a useful property of a term involving $n\in\N_0$ summations.
\begin{crule}[CR]\labelR{CR}
A  term involving several index summations and a product of cumulants satisfies the counting rule if every independent summation over an index $a_{i_1},\dots,a_{i_n}$ yields a contribution of order~$\sqrt{N}$.
\end{crule}
Whenever one can show that~\eqref{CR} holds for a given term, the counting rule allows to replace explicit bounds by cruder power counting arguments, which simplifies calculating the contribution to~\eqref{cumu-expansion}. Throughout the proof of~\eqref{est-by-const}, we, therefore, aim to show this property for as many terms as possible. Note, however, that bounding the leading terms requires an extra power of~$N$ compared to the counting rule (see the proof of Lemma~\ref{2-cumu-lemma} below). For these terms, the factor~$N^{-k/2-1}$ in~\eqref{cumu-expansion} has to be canceled out completely.

\vspace{-2mm}
\subsection{Visualization Using Graphs}\label{sect-graphs}
Following the discussion of the previous subsection, we aim to bound
\begin{equation}\label{prototype-cumu-term}
N^{-k/2-1}\Big|\sum_{a_1,\dots,a_k}\prod_{B\in\pi}\kap{|B|}(a_ja_{j+1}|j\in B)\Big|
\end{equation}
for any given $\pi\in\Pi_k$. Terms of this form can be visualized by considering a $k$-gon, where the vertices are labeled by the indices $a_1,\dots,a_k$ and the edges by the successive double indices~$(a_1,a_2),\dots,(a_k,a_1)$. In this picture, every $j$-cumulant combines $j$ distinct edges such that every edge appears in exactly one of the cumulants in the product. We say that a vertex \emph{belongs to a~j-cumulant}, if it is adjacent to one of the edges associated with it. A vertex~(resp. the corresponding index) which only occurs in a single cumulant is referred to as \emph{internal vertex}~(resp. \emph{internal index}). We define the corresponding graphs as follows.

\begin{definition}
The graph given by the regular $k$-gon with vertices $a_1,\dots,a_k$ is denoted as $\Gamma_k$. For any $\pi\in\Pi_k$, we define $\Gamma_k(\pi)$ as the colored graph in which the $j$ edges that belong to the same $j$-cumulant are assigned the same color.
\end{definition}

Examples of this visualization are given in Fig.~\ref{fig1} and Fig.~\ref{fig2} below, where the colors are indicated by different linestyles. For later applications of algorithms on the graph, we further introduce an ordering. Assume first that $\pi$ is a pairing, i.e., $\Gamma_k(\pi)$ has exactly two edges of each color. Let~$E_k=\{(a_1,a_2),\dots,(a_k,a_1)\}$ denote the~(ordered) set of edges and~${\phi:E_k\rightarrow E_k}$ be the function that maps every edge to the other one of the same color. Starting with~$e_1=(a_1,a_2)$, we go through the elements of $E_k$ and note the pairing~$(e_n,\phi(e_n))$ whenever $e_n\neq\phi(e_i)$ for all~$i<n$ to obtain~$k/2$ pairs of edges. We denote the pairings as an~ordered set~${C(\pi):=\{(e_1,\phi(e_1)),\dots,(e_{k/2},\phi(e_{k/2}))\}}$ and refer to the edges in $C(\pi)$ as \emph{paired edges}. Similarly, we can extract a set $C(\pi)$ of~$j$-tuples,~$j\geq 2$, for any partition~$\pi\in\Pi_k$.

\begin{definition}
For a partition $\pi\in\Pi_k$, we denote the graph with the (ordered) pairing structure induced by $\pi$ by the pair~$(\Gamma_k,C(\pi))$.
\end{definition}

Note that the above construction induces a one-to-one correspondence between cumulant terms of the form~\eqref{prototype-cumu-term} and colored $k$-gons. In particular, both $\Gamma_k(\pi)$ and $(\Gamma_k,C(\pi))$ contain all information on the corresponding term in~\eqref{prototype-cumu-term}.

\vspace{-2mm}
\section{Proof of~\eqref{est-by-const} for Terms Involving Only 2-Cumulants}\label{sect-2-cumu-estimates}
In this section, we focus on the following special case.
\begin{lemma}\label{2-cumu-lemma}
Let $k\in\N$ be even and $\pi\in\Pi_k$ with $|B|=2$ for all $B\in\pi$. Then,
\begin{equation}\label{2-cumu-term}
N^{-k/2-1}\Big|\sum_{a_1,\dots,a_k}\prod_{B\in\pi}\kap{2}(a_ja_{j+1}|j\in B)\Big|\leq C(k).
\end{equation}
\end{lemma}
Assume first that $\pi$ is chosen such that the 2-cumulants occurring in the term do not involve internal indices. We note the following general estimates whose proofs are elementary from~\eqref{A3-2-cumu} and the fact that $s>2$.

\begin{lemma}\label{red-rule-2-cumu}
Assume that \eqref{A3} holds. Then
\begin{align*}
\sum_{a_1}|\kap{2}(a_1a_2,a_3a_4)|&\leq C,\quad \sum_{a_1,a_2}|\kap{2}(a_1a_2,a_3a_4)|\leq C,\quad  \sum_{a_1,a_3}|\kap{2}(a_1a_2,a_3a_4)|\leq CN,\\
\sum_{a_1,a_2,a_3}|\kap{2}(a_1a_2,a_3a_4)|&\leq CN,\quad \sum_{a_1,\dots,a_4}|\kap{2}(a_1a_2,a_3a_4)|\leq CN^2
\end{align*}
uniformly for any choice of the unsummed indices.
\end{lemma}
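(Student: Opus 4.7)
The strategy is to apply the pointwise bound \eqref{A3-2-cumu} directly and then reduce each sum to a standard geometric series over $\Z$ or $\Z^2$. The main issue to handle is that $d$ is a minimum over two pairings, so I would first record the elementary inequality
\begin{equation*}
\frac{1}{1+d(a_1a_2,a_3a_4)^s}\leq \frac{1}{1+(|a_1-a_3|+|a_2-a_4|)^s}+\frac{1}{1+(|a_1-a_4|+|a_2-a_3|)^s},
\end{equation*}
which follows from $d=\min(A,B)$ by noting that $1/(1+d^s)\leq 1/(1+A^s)+1/(1+B^s)$. This lets me treat the two Hermitian-symmetry pairings separately and assume from then on that the indices appear in one fixed pairing.

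The next ingredient is a basic summability fact: since $s>2>1$,
\begin{equation*}
\sum_{m\in\Z}\frac{1}{1+|m|^s}\leq C,\qquad \sum_{m,n\in\Z}\frac{1}{1+(|m|+|n|)^s}\leq C,
\end{equation*}
which I would establish by comparison to $\int_1^\infty r^{-s}\,dr$ in dimension $1$ and $\int_1^\infty r\cdot r^{-s}\,dr$ in dimension $2$, using polar-type coordinates or layer-cake.

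With these two tools in hand, the five inequalities of the lemma become case distinctions on which indices in the pair $(a_1a_2)$ or $(a_3a_4)$ are being summed. If both summed indices live in the same 2-cumulant vertex (e.g.\ the $\sum_{a_1,a_2}$ estimate), I apply the 2D estimate with $m=a_1-a_3,\ n=a_2-a_4$ (resp.\ the symmetric pair); if only one index at a given vertex is summed (e.g.\ $\sum_{a_1}$), I drop the other factor in the $\ell^1$ distance and apply the 1D estimate in $m=a_1-a_3$ (resp.\ $m=a_1-a_4$). For the remaining three estimates, which include indices from the opposite vertex being summed, I first fix those indices, reduce to one of the two constant-bound estimates above, and then pay a trivial factor of $N$ (resp.\ $N^2$) for each extra free index for which no decay is available. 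The uniformity in the unsummed indices is automatic because in each step the upper bound is obtained by translation-invariant summability on $\Z$ or $\Z^2$.

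I do not expect any real obstacle: the argument is elementary and the only subtle point is the splitting step that separates the symmetric pairing in the definition of~$d$, which is what allows a clean reduction to ordinary $\ell^1$ distances. Comparing to~\eqref{CR}, the estimates are either sharp in the counting rule or strictly better, so no tightness issue arises at this stage.
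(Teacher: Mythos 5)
Your proposal is correct and fills in exactly the elementary argument the paper leaves implicit (the paper states only that the proofs "are elementary from~\eqref{A3-2-cumu} and the fact that $s>2$"): splitting the minimum in $d$ into the two pairings, using one-dimensional summability ($s>1$) when one index per pair is summed and two-dimensional summability ($s>2$) when a full pair is summed, and paying a trivial factor of $N$ per summed index from the opposite pair. All five bounds and the uniformity claim follow as you describe, so there is nothing to add.
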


Note that all estimates in Lemma~\ref{red-rule-2-cumu} follow~\eqref{CR}, some of them are even stronger. However, the summation over an internal index would, in general, not obey this counting rule, since~$\smash{\sum_{a_2=1}^N}|\kap{2}(a_1a_2,a_2a_3)|$ may be of order $N$ if $a_1=a_3$. This is the reason why internal indices are treated separately. The key to estimating~\eqref{2-cumu-term} in the given case is a recursive summation procedure. We demonstrate the approach for the term visualized in Fig.~\ref{fig1} below, where different linestyles indicate the edges associated with the same 2-cumulant.

\begin{center}
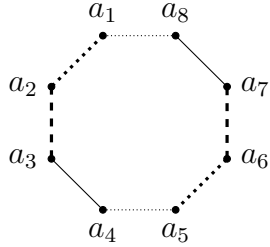

\begin{tikzpicture}[scale=1.25]
\draw (-0.3827,0.9239) node[above=1pt] {$a_1$};
\draw (-0.9239,0.3827) node[left=1pt] {$a_2$};
\draw (-0.9239,-0.3827) node[left=1pt] {$a_3$};
\draw (-0.3827,-0.9239) node[below=1pt] {$a_4$};
\draw (0.3827,-0.9239) node[below=1pt] {$a_5$};
\draw (0.9239,-0.3827) node[right=1pt] {$a_6$};
\draw (0.9239,0.3827) node[right=1pt] {$a_7$};
\draw (0.3827,0.9239) node[above=1pt] {$a_8$};
\draw[black,dotted,very thick] (-0.3827,0.9239) -- (-0.9239,0.3827);
\draw[black,dashed,very thick] (-0.9239,0.3827) -- (-0.9239,-0.3827);
\draw[black] (-0.9239,-0.3827) -- (-0.3827,-0.9239);
\draw[black,densely dotted] (-0.3827,-0.9239) -- (0.3827,-0.9239);
\draw[black,dotted,very thick] (0.3827,-0.9239) -- (0.9239,-0.3827);
\draw[black,dashed,very thick] (0.9239,-0.3827) -- (0.9239,0.3827);
\draw[black] (0.9239,0.3827) -- (0.3827,0.9239);
\draw[black,densely dotted] (0.3827,0.9239) -- (-0.3827,0.9239);
\filldraw [black] (-0.3827,0.9239) circle (1pt)(-0.9239,0.3827) circle (1pt)(-0.9239,-0.3827) circle (1pt)(-0.3827,-0.9239) circle (1pt);
\filldraw [black] (0.3827,-0.9239) circle (1pt)(0.9239,-0.3827) circle (1pt)(0.9239,0.3827) circle (1pt)(0.3827,0.9239) circle (1pt);
\end{tikzpicture}
\captionof{figure}{Visualization of $\kap{2}(a_1a_2,a_5a_6)\kap{2}(a_2a_3,a_6a_7)\kap{2}(a_3a_4,a_7a_8)\kap{2}(a_4a_5,a_8a_1)$}\label{fig1}
\end{center}
\begin{example}\label{ex-nointindices}
To start the summation, estimate
\begin{align}
S&:=N^{-5}\sum_{a_1,\dots,a_8}|\kap{2}(a_1a_2,a_5a_6)\kap{2}(a_2a_3,a_6a_7)\kap{2}(a_3a_4,a_7a_8)\kap{2}(a_4a_5,a_8a_1)|\nonumber\\
&\leq N^{-5}\sum_{a_1,\dots,a_8,a_1'}|\kap{2}(a_1a_2,a_5a_6)\kap{2}(a_2a_3,a_6a_7)\kap{2}(a_3a_4,a_7a_8)\kap{2}(a_4a_5,a_8a_1')|.\label{start-sum}
\end{align}
Adding the extra summation label may $a_1'$ appear as an unnecessary overestimate, but it gives a dedicated start and end point for the summation algorithm by breaking the cyclic structure of the graph. Next, isolate the~2-cumulant involving~$a_1$ by taking the maximum over the remaining indices~$a_2,a_5,a_6$ for the other factors. Together with $a_1$, we sum over all labels appearing in $\kap{2}(a_1a_2,a_5a_6)$. Note that, by Lemma~\ref{red-rule-2-cumu}, the counting rule (CR) applies to this sum, giving a factor of~$(\sqrt{N})^4=N^2$ and the estimate
\begin{align*}
S\leq N^{-5}CN^2\max_{a_2,a_5,a_6}\Big(\sum_{a_3,a_4,a_7,a_8,a_1'}|\kap{2}(a_2a_3,a_6a_7)\kap{2}(a_3a_4,a_7a_8)\kap{2}(a_4a_5,a_8a_1')|\Big).
\end{align*}
From $a_1$, continue counter-clockwise along the octagon to find the next index to sum over, i.e., $a_3$. Identifying and isolating $\kap{2}(a_2a_3,a_6a_7)$, sum over all remaining indices in the factor, i.e., $a_3$ and $a_7$. Again, Lemma~\ref{red-rule-2-cumu} justifies the use of~\eqref{CR}, giving a contribution of $(\sqrt{N})^2=N$ and the estimate
\begin{align*}
S\leq CN^{-2}\max_{a_3,a_5,a_7}\Big(\sum_{a_4,a_8,a_1'}|\kap{2}(a_3a_4,a_7a_8)\kap{2}(a_4a_5,a_8a_1')|\Big).
\end{align*}
Repeating the previous step, continuing along the octagon yields~$a_4$ as the next index. Isolating $\kap{2}(a_3a_4,a_7a_8)$, we apply Lemma~\ref{red-rule-2-cumu} to perform the summations over $a_4$ and $a_8$ using~\eqref{CR}. This yields a factor of ${(\sqrt{N})^2=N}$ and
\begin{align*}
S\leq CN^{-1}\max_{a_4,a_5,a_8}\Big(\sum_{a_1'}|\kap{2}(a_4a_5,a_8a_1')|\Big).
\end{align*}
Lastly, the summation over $a_{1'}$ in $|\kap{2}(a_4a_5,a_8a_1')|$ remains. By applying the counting rule again, we obtain a final bound of order $N^{-1/2}$, which can be estimated by a constant as claimed in Lemma~\ref{2-cumu-lemma}. In particular, the pairing in Fig. 1 gives a subleading contribution to~\eqref{cumu-expansion}. Note that introducing an additional index in the first step is affordable, as are any overestimates from using~\eqref{CR} over the explicit estimates in Lemma~\ref{red-rule-2-cumu}. The cruder bounds allow to carry out every step but the initial one following the same pattern, which reduces the final bound to a counting argument.
\end{example}

As this recursive summation procedure relies on Lemma~\ref{red-rule-2-cumu}, it cannot be applied directly if summation over internal indices are present. To prepare for the general case, we extend Lemma~\ref{red-rule-2-cumu} to more general objects.

\begin{definition}
For ${\mathbf{x}\in\R^N}$, set
\begin{displaymath}
{\kap{2}(\mathbf{x}a_2,a_3a_4):=\sum_{a_1=1}^N\kap{2}(a_1a_2,a_3a_4)x_{a_1}}
\end{displaymath}
and define any 2-cumulants with one or more indices replaced by a vector analogously.
\end{definition}

We collect some estimates for 2-cumulants of this form below, where we follow the convention that vectors only occur in place of the first index of an index pair and only replace unsummed indices. However, the same bounds hold if the second index of the respective pair is replaced instead. As in Lemma~\ref{red-rule-2-cumu}, no internal index is summed~up.

\begin{lemma}\label{ext-red-rule-2-cumu}
Assume that \eqref{A3} holds and let $\mathbf{x},\mathbf{y}\in\R^N$. Then we have
\begin{align*}
\sum_{a_2}|\kap{2}(\mathbf{x}a_2,a_3a_4)|&\leq C\|\mathbf{x}\|_2,\quad \sum_{a_3}|\kap{2}(\mathbf{x}a_2,a_3a_4)|\leq CN^{1/2}\|\mathbf{x}\|_2,\\
 \sum_{a_2,a_3}|\kap{2}(\mathbf{x}a_2,a_3a_4)|&\leq CN\|\mathbf{x}\|_2,\quad \sum_{a_3,a_4}|\kap{2}(\mathbf{x}a_2,a_3a_4)|\leq CN\|\mathbf{x}\|_2,\\ \sum_{a_2,a_3,a_4}|\kap{2}(\mathbf{x}a_2,a_3a_4)|&\leq CN^{3/2}\|\mathbf{x}\|_2
\end{align*}
uniformly for any choice of unsummed indices. Similar bounds hold with two vectors, i.e.,
\begin{align*}
|\kap{2}(\mathbf{x}a_2,\mathbf{y}a_4)|&\leq C\|\mathbf{x}\|_2\|\mathbf{y}\|_2,\quad \sum_{a_2}|\kap{2}(\mathbf{x}a_2,\mathbf{y}a_4)|\leq CN^{1/2}\|\mathbf{x}\|_2\|\mathbf{y}\|_2,\\
\sum_{a_2,a_4}|\kap{2}(\mathbf{x}a_2,\mathbf{y}a_4)|&\leq CN\|\mathbf{x}\|_2\|\mathbf{y}\|_2.
\end{align*}
In particular, the estimates follow~\eqref{CR}.
\end{lemma}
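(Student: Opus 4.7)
The strategy is to reduce both families of bounds to the scalar estimates of Lemma~\ref{red-rule-2-cumu} by exploiting linearity of $\kap{2}$ in its entries. For a single-vector estimate with a subset $S$ of free indices being summed, I would expand
\[
\kap{2}(\mathbf{x}a_2, a_3 a_4) = \sum_{a_1} \kap{2}(a_1 a_2, a_3 a_4)\, x_{a_1},
\]
apply the triangle inequality to bring the modulus inside, and then invoke Cauchy--Schwarz in $a_1$:
\[
\sum_{S} |\kap{2}(\mathbf{x}a_2, a_3 a_4)| \leq \|\mathbf{x}\|_2 \, \|f_S\|_2, \qquad f_S(a_1) := \sum_{S} |\kap{2}(a_1 a_2, a_3 a_4)|.
\]
The remaining $\ell^2$-norm is then handled by the interpolation $\|f_S\|_2^2 \leq \|f_S\|_\infty \, \|f_S\|_1$, and both factors are supplied directly by Lemma~\ref{red-rule-2-cumu} (using the pair-swap symmetry $d(a_1 a_2, a_3 a_4) = d(a_3 a_4, a_1 a_2)$ whenever the needed scalar bound is stated with the roles of the two pairs exchanged). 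A case-by-case pass over the five configurations of $S$ listed in the statement then produces the five claimed single-vector bounds.

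For the two-vector estimates, the analogous reduction yields a bilinear form
\[
\sum_{S} |\kap{2}(\mathbf{x}a_2, \mathbf{y}a_4)| \leq \sum_{a_1, a_3} M_S(a_1, a_3)\, |x_{a_1}|\, |y_{a_3}|, \quad M_S(a_1, a_3) := \sum_S |\kap{2}(a_1 a_2, a_3 a_4)|,
\]
which I would bound by $\|M_S\| \, \|\mathbf{x}\|_2\, \|\mathbf{y}\|_2$, where $\|M_S\|$ denotes the $\ell^2$-operator norm of $M_S$. The Schur test $\|M_S\| \leq \sqrt{\|M_S\|_{1 \to 1}\, \|M_S\|_{\infty \to \infty}}$ reduces this to bounding row- and column-sum maxima, which once more fall under Lemma~\ref{red-rule-2-cumu} combined with the pair-swap symmetry. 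This yields the three claimed two-vector bounds.

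The only mildly subtle point is to recognize which scalar sums yield $C$ versus $CN$. An extra factor of $N$ appears precisely when one sums a single index from each pair, because the distance $d$ vanishes on the diagonal where those two indices agree and the other two coincide as well; for instance, $\sum_{a_2, a_3} |\kap{2}(a_1 a_2, a_3 a_4)|$ can be as large as $N$ when $a_1 = a_4$. Apart from this careful book-keeping and the appropriate use of the pair-swap symmetry, the proof is a direct and systematic decomposition into the scalar estimates already proved.
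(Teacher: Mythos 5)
Your proposal is correct and follows essentially the same route as the paper: both reduce every bound to the scalar estimates of Lemma~\ref{red-rule-2-cumu} via $\max_j|x_j|\le\|\mathbf{x}\|_2$ or Cauchy--Schwarz in the vectorized index, with the $\ell^2$-norms controlled by exactly the $\|f\|_2^2\le\|f\|_\infty\|f\|_1$ interpolation you describe. Your Schur test for the two-vector case is just a repackaging of the paper's $\epsilon$-weighted AM--GM argument (which is the standard proof of the Schur test), and your closing observation about where the extra factor of $N$ arises matches the paper's remark following Lemma~\ref{red-rule-2-cumu}.
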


\begin{proof}
Noting that $\max_j|x_j|\leq\|\mathbf{x}\|_2$, the bounds in Lemma~\ref{red-rule-2-cumu} imply that
\begin{align*}
\sum_{a_2}|\kap{2}(\mathbf{x}a_2,a_3a_4)|\leq \|\mathbf{x}\|_2 \sum_{a_1,a_2}|\kap{2}(a_1a_2,a_3a_4)|\leq C\|\mathbf{x}\|_2.
\end{align*}
If the summation over $a_2$ is replaced by a summation over $a_3$ or $a_4$, we obtain a bound of order $N$ instead. Further, an application of the Cauchy-Schwarz inequality leads to
\begin{align}
\sum_{a_3}|\kap{2}(\mathbf{x}a_2,a_3a_4)|&\leq\|\mathbf{x}\|_2\sqrt{\sum_{a_1}\Big(\sum_{a_3}|\kap{2}(a_1a_2,a_3a_4)|\Big)^2}\leq CN^{1/2}\|\mathbf{x}\|_2.\label{1sum-1vector}
\end{align}
Recalling that the summation over three indices gives a factor $N$, the bound for summation over all three indices in $|\kap{2}(\mathbf{x}a_2,a_3a_4)|$ follows analogously.

\medskip
For 2-cumulants that involve two vectors, applying~\eqref{A3-2-cumu} yields
\begin{align}
|\kap{2}(\mathbf{x}a_2,\mathbf{y}a_4)|&\leq\sum_{a_1,a_3}\Big(\frac{ C_{\kappa}|x_{a_1}y_{a_3}|}{1+|a_1-a_3|^s+|a_2-a_4|^s}+\frac{ C_{\kappa}|x_{a_1}y_{a_3}|}{1+|a_1-a_4|^s+|a_2-a_3|^s}\Big).\label{0sums-2vectors}
\end{align}
Set~${\varepsilon=\|\mathbf{y}\|_2/\|\mathbf{x}\|_2}$ and estimate the first term as
\begin{align*}
\sum_{a_1,a_3}\frac{|x_{a_1}y_{a_3}|}{1+|a_1-a_3|^s+|a_2-a_4|^s}&\leq \sum_{a_1,a_3}\frac{\epsilon|x_{a_1}|^2+\epsilon^{-1}|y_{a_3}|^2}{1+|a_1-a_3|^s}\leq C(\epsilon\|\mathbf{x}\|_2^2+\epsilon^{-1}\|\mathbf{y}\|_2^2)
\end{align*}
to obtain an $N$-independent bound. The estimate of the second term is similar.

\medskip
Adding a summation over one index, e.g., $a_2$, two applications of the Cauchy-Schwarz inequality and the third estimate of Lemma~\ref{red-rule-2-cumu} yield
\begin{align}
\sum_{a_2}|\kap{2}(\mathbf{x}a_2,\mathbf{y}a_4)|&\leq\|\mathbf{x}\|_2\|\mathbf{y}\|_2\sqrt{\sum_{a_3}\Big(\sum_{a_1}\sum_{a_2}|\kap{2}(a_1a_2,a_3a_4)|\Big)^2}\leq CN^{1/2}\|\mathbf{x}\|_2\|\mathbf{y}\|_2.\label{1sum-2vectors}
\end{align}
Finally, it follows
\begin{align*}
&\sum_{a_2,a_4}|\kap{2}(\mathbf{x}a_2,\mathbf{y}a_4)|\leq \sum_{a_1,\dots,a_4}\Big(\frac{C_{\kappa}|x_{a_1}y_{a_3}|}{1+|a_1-a_3|^s+|a_2-a_4|^s}+\frac{C_{\kappa}|x_{a_1}y_{a_3}|}{1+|a_1-a_4|^s+|a_2-a_3|^s}\Big).
\end{align*}
Here, we obtain
\begin{align*}
\sum_{a_1,\dots,a_4=1}^N\frac{|x_{a_1}y_{a_3}|}{1+|a_1-a_3|^s+|a_2-a_4|^s}&\leq CN\sum_{a_1,a_3=1}^N\frac{|x_{a_1}y_{a_3}|}{1+|a_1-a_3|^{s-1}}\leq CN\|\mathbf{x}\|_2\|\mathbf{y}\|_2,
\end{align*}
and the estimate for the second term is similar.
\end{proof}

Lemma~\ref{ext-red-rule-2-cumu} is the key tool for estimating terms that include summation over internal indices. Consider the matrix~${T\in\R^{N\times N}}$ defined by its matrix elements
\begin{equation}\label{def-T}
T_{a_1,a_3}:=\sum_{a_2}T^{(a_2)}_{a_1,a_3}:=\sum_{a_2}\kap{2}(a_1a_2,a_2a_3)
\end{equation}
and observe that $|T_{a_1,a_3}|\leq C_{\kappa}N$ by~\eqref{A3-2-cumu}, but also $\|T\|\leq CN$, since
\begin{align*}
\|T^{(a_2)}\mathbf{x}\|_2^2&\leq\sum_{a_1}\Big(\sum_{a_3}|\kap{2}(a_1a_2,a_2a_3)x_{a_3}|\Big)^2\\
&\leq C\sum_{a_3,a_3'}\sum_{a_1}\frac{1}{1+|a_1-a_3|^s} \frac{1}{1+|a_1-a_3'|^s}(x_{a_3}^2+x_{a_3'}^2)\\
&\leq C\|\mathbf{x}\|_2^2
\end{align*}
for $\mathbf{x}\in\R^N$, and $\|T\|\leq\sum_{a_2}\|T^{(a_2)}\|$. Next, we derive similar estimates for the matrix~${T^{[j]}\in\R^{N\times N}}$ defined for ${2\leq j\leq k-1}$ by
\begin{equation}\label{def-B}
T^{[j]}_{a_1,a_{2j+1}}:=\sum_{a_2,a_{2j}}\kap{2}(a_1a_2,a_{2j}a_{2j+1})T_{a_2,a_{2j}}^{j-1}.
\end{equation}
Note that the superscript corresponds to the total number of 2-cumulants that are rewritten to obtain $T^{[j]}_{a_1,a_{2j+1}}$. Again, we have $\smash{|T^{[j]}_{a_1,a_{2j+1}}|}\leq CN^j$ by a direct estimate, but also
\begin{equation}\label{norm-Tj}
\|T^{[j]}\|\leq CN^j\quad \forall j\in\{2,\dots, k-1\}.
\end{equation}
As the argument is the same in the general case, we consider only $j=2$. Define the vector~$\mathbf{y}^{a_4}$ through $(\mathbf{y}^{a_4})_{a_2}:=N^{-1}T_{a_2,a_4}$, $a_2=1,\dots,N$ and let
\begin{displaymath}
T^{[2],a_4}:=\sum_{a_2}\kap{2}(a_1a_2,a_4a_5)T_{a_2,a_4}.
\end{displaymath}
Observing that~${\|\mathbf{y}^{a_4}\|_2\leq C}$ uniformly in~$a_4$, we obtain
\begin{align}
\|T^{[2],a_4}\mathbf{x}\|_2^2&\leq N^2\sum_{a_1}\Big(\sum_{a_2,a_5}|\kap{2}(a_1a_2,a_4a_5)(\mathbf{y}^{a_4})_{a_2}x_{a_5}|\Big)^2\leq CN^2\label{norm-B}
\end{align}
for any $\mathbf{x}\in\R^N$ with $\|\mathbf{x}\|_2\leq 1$, which implies $\|T^{[2]}\|\leq N\max_{a_4}\|T^{[2],a_4}\|\leq CN^2$. The second inequality of~\eqref{norm-B} follows from applying~\eqref{A3-2-cumu} and estimating the $a_1$ summation using a suitable distinction of cases. Note that the structure of the matrix-vector multiplication in $T^{[2],a_4}\mathbf{x}$ yields entries of the form
\begin{displaymath}
\sum_{a_2,a_5}\kap{2}(a_1a_2,a_4a_5)(\mathbf{y}^{a_4})_{a_2}x_{a_5}=\kap{2}(a_1\mathbf{y}^{a_4},a_4\mathbf{x}),
\end{displaymath}
 which do not allow for the usual convention of the vector occurring only as the first index of an index pair.

\medskip
We now demonstrate the approach for treating general products of 2-cumulants for the terms visualized in Fig. \ref{fig2} below.

\begin{figure}[H]
\begin{center}
\begin{tikzpicture}[scale=1.25] 
\draw (0,1) node[above=1pt] {$a_1$};
\draw (-0.5878,0.809) node[above left=1pt] {$a_2$};
\draw (-0.951,0.309) node[left=1pt] {$a_3$};
\draw (-0.951,-0.309) node[left=1pt] {$a_4$};
\draw (-0.5878,-0.809) node[below left=1pt] {$a_5$};
\draw (0,-1) node[below=1pt] {$a_6$};
\draw (0.5878,-0.809) node[below right=1pt] {$a_7$};
\draw (0.951,-0.309) node[right=1pt] {$a_8$};
\draw (0.951,0.309) node[right=1pt] {$a_9$};
\draw (0.5878,0.809) node[above right=1pt] {$a_{10}$};
\draw[black] (0,1) -- (-0.5878,0.809);
\draw[black] (-0.5878,0.809) -- (-0.951,0.309);
\draw[black, densely dotted] (-0.951,0.309) -- (-0.951,-0.309);
\draw[black, densely dotted] (-0.951,-0.309) -- (-0.5878,-0.809);
\draw[black, dashdotted] (-0.5878,-0.809) -- (0,-1);
\draw[black,dashed,very thick] (0,-1) -- (0.5878,-0.809);
\draw[black,dotted,very thick] (0.5878,-0.809) -- (0.951,-0.309);
\draw[black,dotted,very thick] (0.951,-0.309) -- (0.951,0.309);
\draw[black, dashdotted] (0.951,0.309) -- (0.5878,0.809);
\draw[black,dashed,very thick] (0.5878,0.809) -- (0,1);
\filldraw [black] (0,1) circle (1pt)(-0.5878,0.809) circle (1pt)(-0.951,0.309) circle (1pt)(-0.951,-0.309) circle (1pt)(-0.5878,-0.809) circle (1pt);
\filldraw [black] (0,-1) circle (1pt)(0.5878,-0.809) circle (1pt)(0.951,-0.309) circle (1pt)(0.951,0.309) circle (1pt)(0.5878,0.809) circle (1pt);\end{tikzpicture}\hspace{2cm}
\begin{tikzpicture}[scale=1.25] 
\draw (0,1) node[above=1pt] {$a_1$};
\draw (-0.5878,0.809) node[above left=1pt] {$a_2$};
\draw (-0.951,0.309) node[left=1pt] {$a_3$};
\draw (-0.951,-0.309) node[left=1pt] {$a_4$};
\draw (-0.5878,-0.809) node[below left=1pt] {$a_5$};
\draw (0,-1) node[below=1pt] {$a_6$};
\draw (0.5878,-0.809) node[below right=1pt] {$a_7$};
\draw (0.951,-0.309) node[right=1pt] {$a_8$};
\draw (0.951,0.309) node[right=1pt] {$a_9$};
\draw (0.5878,0.809) node[above right=1pt] {$a_{10}$};
\draw[black] (0,1) -- (-0.5878,0.809);
\draw[black] (-0.5878,0.809) -- (-0.951,0.309);
\draw[black, densely dotted] (-0.951,0.309) -- (-0.951,-0.309);
\draw[black, densely dotted] (-0.951,-0.309) -- (-0.5878,-0.809);
\draw[black, dashdotted] (-0.5878,-0.809) -- (0,-1);
\draw[black,dashed,very thick] (0,-1) -- (0.5878,-0.809);
\draw[black,dotted,very thick] (0.5878,-0.809) -- (0.951,-0.309);
\draw[black,dotted,very thick] (0.951,-0.309) -- (0.951,0.309);
\draw[black,dashed,very thick] (0.951,0.309) -- (0.5878,0.809);
\draw[black, dashdotted] (0.5878,0.809) -- (0,1);
\filldraw [black] (0,1) circle (1pt)(-0.5878,0.809) circle (1pt)(-0.951,0.309) circle (1pt)(-0.951,-0.309) circle (1pt)(-0.5878,-0.809) circle (1pt);
\filldraw [black] (0,-1) circle (1pt)(0.5878,-0.809) circle (1pt)(0.951,-0.309) circle (1pt)(0.951,0.309) circle (1pt)(0.5878,0.809) circle (1pt);\end{tikzpicture}
\caption{A crossing (left) and a non-crossing pairing (right) for $k=10$.}\label{fig2}
\end{center}
\end{figure}
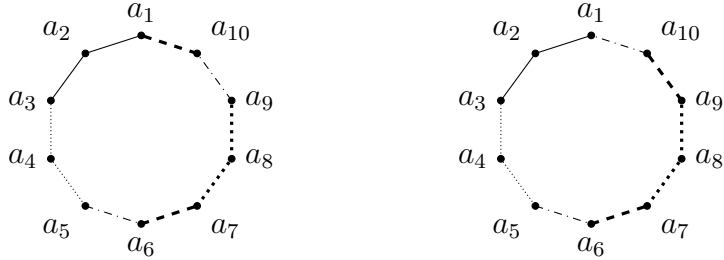
\vspace{-1cm}
\begin{example}\label{ex-intindices}
First, consider the term on the left of Fig.~2. Recalling the definition of the matrix $T$ from~\eqref{def-T}, rewrite the summation over the internal indices $a_2$, $a_4$ and~$a_7$~as
\begin{align}
&N^{-6}\sum_{a_1,\dots,a_{10}}\kap{2}(a_1a_2,a_2a_3)\kap{2}(a_3a_4,a_4a_5)\kap{2}(a_5a_6,a_9a_{10})\kap{2}(a_6a_7,a_{10}a_1)\kap{2}(a_7a_8,a_8a_9)\nonumber\\
&=N^{-6}\sum_{a_1,a_5,a_6,a_7,a_9,a_{10}}T^2_{a_1,a_5}\kap{2}(a_5a_6,a_9a_{10})\kap{2}(a_6a_7,a_{10}a_1)T_{a_7,a_9}\nonumber\\
&=N^{-3}\sum_{a_1,a_6,a_7,a_{10}}\kap{2}(\mathbf{x}^{a_1}a_6,\mathbf{y}^{a_7}a_{10})\kap{2}(a_6a_7,a_{10}a_1),\label{rewritten-ex}
\end{align}
where the vectors $\mathbf{x}^{a_1},\mathbf{y}^{a_7}\in\R^N$ in the last step are given by
\begin{align*}
x^{a_1}_{a_5}&=\frac{1}{N^2}T^2_{a_1,a_5},\ a_5=1,\dots,N,\quad y^{a_7}_{a_9}=\frac{1}{N}T_{a_7,a_9},\ a_9=1,\dots,N.
\end{align*}
To keep the notation consistent with the proof in the general case and Lemma~\ref{ext-red-rule-2-cumu}, we introduce the convention that vectors obtained from matrix elements are always defined via the rows of the respective matrix. Recalling that~$\|T\|\leq CN$, we have $\|\mathbf{x}^{a_1}\|_2,\|\mathbf{y}^{a_7}\|_2\leq C$ uniformly for any choice of $a_1$ and $a_7$, respectively. Note that one factor of $N$ per power of~$T$ is written in front of the sum and that the convention chosen for the vectors ensures that we always replace the first index of an index pair. Further, the sum obtained from~\eqref{rewritten-ex} does not involve summation over internal indices. Modifying the recursive summation procedure from Example~\ref{ex-nointindices} by also taking the maximum over $a_1$ in the 2-cumulant involving $\mathbf{x}^{a_1}$ instead of introducing the additional summation label $a_1'$, and using the bounds from Lemmas~\ref{red-rule-2-cumu} and~\ref{ext-red-rule-2-cumu} yields
\begin{align}
&N^{-3}\sum_{a_1,a_6,a_7,a_{10}}|\kap{2}(\mathbf{x}^{a_1}a_6,\mathbf{y}^{a_7}a_{10})\kap{2}(a_6a_7,a_{10}a_1)|\nonumber\\
&\leq N^{-3} \max_{a_1,a_6,a_7,a_{10}}|\kap{2}(\mathbf{x}^{a_1}a_6,\mathbf{y}^{a_7}a_{10})|\sum_{a_1,a_6,a_7,a_{10}}|\kap{2}(a_6a_7,a_{10}a_1)|\nonumber\\
&\leq CN^{-3}N^2\max_{a_1,a_6,a_7,a_{10}}|\kap{2}(\mathbf{x}^{a_1}a_6,\mathbf{y}^{a_7}a_{10})|\leq CN^{-1}\leq C,\label{eq-ex-withvectors}
\end{align}
showing that the pairing on the left of Fig. \ref{fig2} gives a sub-leading contribution to~\eqref{cumu-expansion}.

\medskip
In contrast, observe that the non-crossing pairing on the right of Fig.~\ref{fig2} needs to be handled differently, since treating $T$ as before yields~$\kap{2}(\mathbf{x}^{a_1}a_6,a_{10}a_1)$ and~$\kap{2}(a_6a_7,\mathbf{y}^{a_7}a_{10})$, which cannot be estimated using Lemma~\ref{ext-red-rule-2-cumu} due to the indices $a_1$ and~$a_7$ appearing twice in the respective 2-cumulants. However, the terms can be rewritten using the matrices~$T^{[j]}$ defined in~\eqref{def-B}. Recalling that $\|T^{[j]}\|\leq CN^j$ from~\eqref{norm-Tj}, we obtain
\begin{align*}
&N^{-6}\Big|\sum_{a_1,a_5,a_6,a_7,a_9,a_{10}}T^2_{a_1,a_5}\kap{2}(a_5a_6,a_{10}a_1)\kap{2}(a_6a_7,a_9a_{10})T_{a_7,a_9}\Big|\\
&=N^{-6}\Big|\sum_{a_6,a_{10}}T^{[3]}_{a_{10},a_6}T^{[2]}_{a_6,a_{10}}\Big|=N^{-6}\big|\tr \big(T^{[3]}T^{[2]}\big)\big|\leq CN^{-6}NN^3N^2=C.
\end{align*}
Hence, the term on the left of Fig.~\ref{fig2} yields a leading contribution to~\eqref{cumu-expansion}.
\end{example}

After all these preparations, we can give a complete proof of Lemma~\ref{2-cumu-lemma}.

\vspace{-2mm}
\subsection{Proof of Lemma~\ref{2-cumu-lemma}}
Let $k\geq2$ be even, $\pi\in\Pi_k$ such that $|B|=2$ for all $B\in\pi$. The proof of Lemma~\ref{2-cumu-lemma} is structured into two main steps. In the first step (Lemmas~\ref{rewriting-graph} and~\ref{rewriting-cumu}), we generalize the strategies from Example~\ref{ex-intindices} to rewrite the term on the left-hand side to a form that is tractable by a recursive summation procedure. We then carry out the required estimates in the second step~(Lemmas~\ref{estimates1} and~\ref{estimates2}), showing that the final bound is indeed of order one.

\medskip
\underline{\smash{Step 1: Rewriting}}\\
The aim of the rewriting step is to obtain a term of the form
\begin{equation}\label{rewrite-goal}
\sum_{a_1,\dots,a_k}\prod_{B\in\pi}\kap{2}(a_ja_{j+1}|j\in B)=\sum_{b_1,\dots,b_{k'}}\Big(\prod_{B\in\pi'}\kap{2}(b_jb_{j+1}|j\in B)\prod_{i\in J}M^{(i)}_{b_{i_1},b_{i_2}}\Big)
\end{equation}
where $k'\leq k$, $M^{(i)}\in\R^{N\times N}$, and $\pi'$ is chosen such that neither of the~2-cumulants on the right-hand side of~\eqref{rewrite-goal} involves internal indices and such that $b_{i_1},b_{i_2}$ always appear in some~2-cumulant, but never in the same one. Recall that this structure was also obtained in~\eqref{rewritten-ex} of Example~\ref{ex-intindices}. Whenever the left-hand side of~\eqref{2-cumu-term} does not involve summation over internal indices, choosing $k=k'$, $\pi=\pi'$, and $J=\emptyset$ in~\eqref{rewrite-goal} allows to skip the rewriting step~(see Example~\ref{ex-nointindices}).

\medskip
Following the discussion in Section~\ref{sect-graphs}, the term on the left-hand side of~\eqref{2-cumu-term} corresponds to the tuple $(\Gamma_k,C(\pi))$. We aim to visualize the right-hand side of~\eqref{rewrite-goal} as well by associating it with a polygon on the vertices $b_1,\dots,b_{k'}$. In this picture, any matrix element~$\smash{M^{(i)}_{b_{i_1},b_{i_2}}}$ corresponds to an edge connecting the vertices $b_{i_1},b_{i_2}$ and the two edges associated with the same 2-cumulant are assigned the same color. We further introduce edge weights $w$ to encode bounds of order $N^w$ on the corresponding matrices in the graph for the following estimates. The rewriting procedure to obtain~\eqref{rewrite-goal} thus corresponds to an algorithm on $(\Gamma_k,C(\pi))$, which we introduce in Lemma~\ref{rewriting-graph} below. An example is given in Fig.~\ref{fig3}.

\begin{lemma}\label{rewriting-graph}
Let $k\geq2$ be even, $\pi\in\Pi_k$ be a pairing and $(\Gamma_k,C(\pi))$ the corresponding graph introduced in Section~\ref{sect-graphs}. Then there exists an edge-weighted graph $\smash{\widetilde{\Gamma}_{k'}(\pi)}$ on $k'\leq k$ vertices with integer weights $w_i\geq0$ and an ordered set $\smash{\widetilde{C}(\pi)}$ of pairings such that the following properties~hold.
\begin{itemize}
\item[(a)] Edges that appear in $\smash{\widetilde{C}(\pi)}$ all have zero weight.
\item[(b)] Two edges with nonzero weight are never adjacent to each other.
\item[(c)] Two adjacent edges in $\smash{\widetilde{\Gamma}_{k'}(\pi)}$ that both appear in $\smash{\widetilde{C}(\pi)}$ belong to different pairs.
\item[(d)] Whenever two edges are adjacent to an edge assigned a nonzero weight, they appear in~$\smash{\widetilde{C}(\pi)}$, but belong to different pairs.
\end{itemize}
\end{lemma}

\begin{proof}
We obtain $(\smash{\widetilde{\Gamma}_{k'}(\pi)},\smash{\widetilde{C}(\pi)})$ from $(\Gamma_k,C(\pi))$ by assigning all edges of $\Gamma_k$ the initial weight zero and carrying out the following reduction algorithm.

\begin{itemize}
\item[Step I] Check the set $C(\pi)$ for pairs that involve adjacent edges. If there are any, go through them in the order they appear in $C(\pi)$, replace the corresponding edges of~$\Gamma_k$ and their common vertex by a single edge and remove the pair from $C(\pi)$. Any new edges created in this step are assigned the weight one. This ensures (a) and (c).
\item[Step II] Check the new graph for any edges of nonzero weight that are adjacent. If there are any, identify the edges that involve the vertex $a_l$ for the smallest value $l\in\{1,\dots,k\}$, replace the two edges and their common vertex by a single edge and assign it the sum of the weights of the edges that were replaced. This step is repeated until (b) holds. The validity of (a) and (c) is not changed in the process.
\item[Step III] Check the new graph for any edges assigned a nonzero weight, say $w_j>0$, that are adjacent to two edges belonging to the same pair. If there are any, go through the corresponding pairs in the order they appear in $C(\pi)$ and replace the pair and the weighted edge, as well as the two vertices between them by a single edge. After going through $C(\pi)$ once, remove the pairs that were replaced from $C(\pi)$ and assign the new edges the respective weights $w_j+1$. As this may generate new subgraphs of the same structure, repeat the step until (d) is satisfied. This does not interfere with~(a) and (c), but~(b) may not hold any more.
\item[Step IV] Repeat Steps II and III until neither can be carried out any further. The graph resulting from this procedure satisfies (b) and (d) simultaneously.
\end{itemize}
Denoting the (weighted) graph resulting from this procedure by $\smash{\widetilde{\Gamma}_{k'}(\pi)}$ and the collection of pairs that remain in $C(\pi)$ after Step IV by $\smash{\widetilde{C}(\pi)}$, we obtain a tuple that satisfies~(a)-(d).
\end{proof}

Note that the above algorithm removes all subgraphs that correspond to a non-crossing partition of a subset of $\{1,\dots,k\}$ and replaces them by an edge with nonzero weight each. In particular, $\smash{\widetilde{C}(\pi)}=\emptyset$ whenever the pairing~$\pi$ is non-crossing. In this case only one vertex connected to itself by an edge (loop) of weight~$k/2$ remains, as the weight assigned to an edge reflects the number of pairings removed from $C(\pi)$ in obtaining it. We demonstrate the algorithm for the example given on the right of Fig.~\ref{fig2} below. For simplicity, the vertex labels and edge weights that are equal to zero are left out.

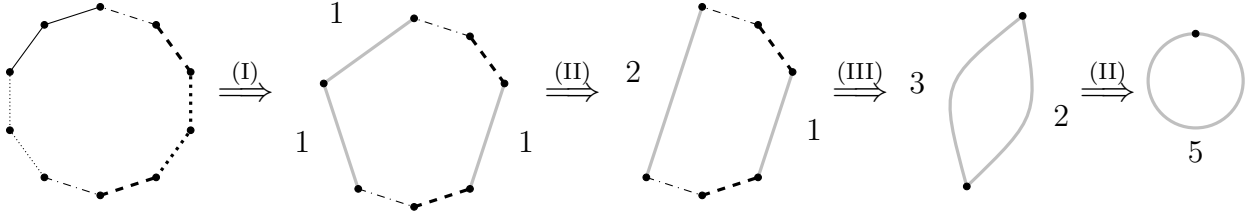
\begin{figure}[H]
\begin{center}
\begin{tikzpicture}[scale=1.25,baseline=(current bounding box.center)]
\draw[black] (0,1) -- (-0.5878,0.809);
\draw[black] (-0.5878,0.809) -- (-0.951,0.309);
\draw[black, densely dotted] (-0.951,0.309) -- (-0.951,-0.309);
\draw[black, densely dotted] (-0.951,-0.309) -- (-0.5878,-0.809);
\draw[black, dashdotted] (-0.5878,-0.809) -- (0,-1);
\draw[black,dashed,very thick] (0,-1) -- (0.5878,-0.809);
\draw[black,dotted,very thick] (0.5878,-0.809) -- (0.951,-0.309);
\draw[black,dotted,very thick] (0.951,-0.309) -- (0.951,0.309);
\draw[black,dashed,very thick] (0.951,0.309) -- (0.5878,0.809);
\draw[black, dashdotted] (0.5878,0.809) -- (0,1);
\filldraw [black] (0,1) circle (1pt)(-0.5878,0.809) circle (1pt)(-0.951,0.309) circle (1pt)(-0.951,-0.309) circle (1pt)(-0.5878,-0.809) circle (1pt);
\filldraw [black] (0,-1) circle (1pt)(0.5878,-0.809) circle (1pt)(0.951,-0.309) circle (1pt)(0.951,0.309) circle (1pt)(0.5878,0.809) circle (1pt);\end{tikzpicture}\hspace{0.3cm}\scalebox{1.1}{$\overset{\mathrm{(I)}}\Longrightarrow$}
\begin{tikzpicture}[scale=1.25,baseline=(current bounding box.center)]
\draw (-0.5878,0.809) node[above left=1pt] {1};
\draw (-0.951,-0.309) node[left=1pt] {1};
\draw (0.951,-0.309) node[right=1pt] {1};
\draw[lightgray,very thick] (0,1) -- (-0.951,0.309);
\draw[lightgray,very thick] (-0.951,0.309) -- (-0.5878,-0.809);
\draw[black,dashdotted] (-0.5878,-0.809) -- (0,-1);
\draw[black,dashed, very thick] (0,-1) -- (0.5878,-0.809);
\draw[lightgray,very thick] (0.5878,-0.809) -- (0.951,0.309);
\draw[black,dashed, very thick] (0.951,0.309) -- (0.5878,0.809);
\draw[black,dashdotted] (0.5878,0.809) -- (0,1);
\filldraw [black] (0,1) circle (1pt)(-0.951,0.309) circle (1pt)(-0.5878,-0.809) circle (1pt);
\filldraw [black] (0,-1) circle (1pt)(0.5878,-0.809) circle (1pt)(0.951,0.309) circle (1pt)(0.5878,0.809) circle (1pt);
\end{tikzpicture}\scalebox{1.1}{$\overset{\mathrm{(II)}}\Longrightarrow$}
\begin{tikzpicture}[scale=1.25,baseline=(current bounding box.center)]
\draw (-0.951,0.309) node[right=1pt] {2};
\draw (0.951,-0.309) node[right=1pt] {1};
\draw[lightgray,very thick] (0,1) -- (-0.5878,-0.809);
\draw[black,dashdotted] (-0.5878,-0.809) -- (0,-1);
\draw[black,dashed, very thick] (0,-1) -- (0.5878,-0.809);
\draw[lightgray,very thick] (0.5878,-0.809) -- (0.951,0.309);
\draw[black,dashed, very thick] (0.951,0.309) -- (0.5878,0.809);
\draw[black,dashdotted] (0.5878,0.809) -- (0,1);
\filldraw [black] (0,1) circle (1pt)(-0.5878,-0.809) circle (1pt);
\filldraw [black] (0,-1) circle (1pt)(0.5878,-0.809) circle (1pt)(0.951,0.309) circle (1pt)(0.5878,0.809) circle (1pt);
\end{tikzpicture}\scalebox{1.1}{$\overset{\mathrm{(III)}}\Longrightarrow$}
\begin{tikzpicture}[scale=1.25,baseline=(current bounding box.center)]
\draw (-0.2939,0.0955) node[left=1pt] {3};
\draw (0.7694,-0.25) node[right=1pt] {2};
\draw[lightgray,very thick] (0,-1) .. controls (-0.2939,0.0955) .. (0.5878,0.809);
\draw[lightgray,very thick] (0.5878,0.809) .. controls (0.7694,-0.25) .. (0,-1);
\filldraw [black] (0,-1) circle (1pt)(0.5878,0.809) circle (1pt);
\end{tikzpicture}\scalebox{1.1}{$\overset{\mathrm{(II)}}\Longrightarrow$}
\begin{tikzpicture}[scale=1.25,baseline=(current bounding box.center)]
\draw (0,0) node[below=1pt] {5};
\draw[lightgray,very thick] (0,0.5) circle (.5cm);
\filldraw [black] (0,1) circle (1pt);
\end{tikzpicture}
\caption{The steps of the reduction algorithm with arrows indicating the steps.}\label{fig3}
\end{center}
\end{figure}

\vspace{-0.7cm}
Observe that there is a one-to-one correspondence between the graph $(\smash{\widetilde{\Gamma}_{k'}(\pi)},\smash{\widetilde{C}(\pi)})$ obtained in Lemma~\ref{rewriting-graph} and a suitable right-hand side of~\eqref{rewrite-goal}. By considering the algebraic counterpart of the graph algorithm, we obtain a bound for the norm of the matrices involved.

\begin{lemma}\label{rewriting-cumu}
Let $k\geq2$ be even. For every pairing $\pi\in\Pi_k$ exists an expression of the form~\eqref{rewrite-goal} with matrices $M^{(i)}$, $i\in J$, that satisfy $\|M^{(i)}\|\leq CN^{w_i}$. The numbers $w_i\geq1$ correspond to the edge weights of the graph $(\smash{\widetilde{\Gamma}_{k'}(\pi)},\smash{\widetilde{C}(\pi)})$ that satisfies (a)-(d) of Lemma~\ref{rewriting-graph}.
\end{lemma}

\begin{proof}
Starting with the left-hand side of~\eqref{rewrite-goal}, we carry out the algorithm from Lemma~\ref{rewriting-graph} on the corresponding graph and note the analogous steps for the 2-cumulants. The matrices~$M^{(i)}$, $i\in J$, are defined inductively along the rewriting procedure. Recall that every pairing in~$C(\pi)$ corresponds to a 2-cumulant and every vertex in $\Gamma_k$ represents a summation over an index $a_1,\dots,a_k$. An iteration of Step I thus corresponds to replacing
\begin{displaymath}
\sum_{a_{l+1}}\kap{2}(a_la_{l+1},a_{l+1}a_{l+2})=T_{a_l,a_{l+2}},
\end{displaymath}
i.e., applying~\eqref{def-T} to carry out a summation over an internal index. Hence, every new edge of weight one in Step I corresponds to introducing one matrix element of $T$ and defining
\begin{displaymath}
M^{(i)}:=T
\end{displaymath}
for all $i\in J$. The vertex that is removed from the graph matches the index over which the summation was carried out. Recalling that $\|T\|\leq CN$, the order of $N$ in the norm bound matches the edge weight as claimed. 

\medskip
Step~II corresponds to matrix multiplication. Let $M^{(i_1)}$ and $M^{(i_2)}$ denote two matrices corresponding to adjacent edges of nonzero weight in the graph and let $a_l,a_m,a_n\in\{a_1,\dots,a_k\}$ with~$a_l\neq a_m$ and~$a_m\neq a_n$ denote the vertices adjacent to them. An iteration of Step II corresponds to replacing
\begin{displaymath}
\sum_{a_m}M^{(i_1)}_{a_l,a_m}M^{(i_2)}_{a_m,a_n}=(M^{(i_1)}M^{(i_2)})_{a_l,a_n}=:M^{(i')}_{a_l,a_n},
\end{displaymath}
where $M^{(i')}$ denotes the matrix corresponding to the new edge and the vertex that is removed matches the summation index. Since~${\|M^{(i')}\|\leq\|M^{(i_1)}\|\ \|M^{(i_2)}\|}$, we obtain a norm bound from the initial estimates. In particular, the exponents of $N$ from the previous bounds are added, matching the edge weight prescribed by the graph algorithm.

\medskip
For Step III, let $M^{(i)}$ denote the matrix corresponding to an edge of weight $w_i>0$ and assume that the two adjacent edges belong to the same pair, which is represented by the~2-cumulant~$\kap{2}(a_la_{l+1},a_ma_{m+1})$. An iteration of Step III corresponds to replacing
\begin{align}\label{new-matrix}
\sum_{a_{l+1},a_m}M^{(i)}_{a_{l+1},a_m}\kap{2}(a_la_{l+1},a_ma_{m+1})=:M^{(i')}_{a_l,a_{m+1}},
\end{align}
where the matrix $M^{(i')}$ corresponds to the edge of weight $w_i+1$ introduced in the graph algorithm and the summation indices $a_{l+1},a_m$ match the vertices that are removed. Note that the norm of the new matrix can be estimated by $\|M^{(i')}\|\leq N\|M^{(i)}\|\leq CN^{w_i+1}$ following an argument similar to~\eqref{norm-B} such that the power of $N$ in the bound again matches the prescribed edge weight. We have thus shown that the reduction algorithm on $(\Gamma_k,C(\pi))$ from Lemma~\ref{rewriting-graph} translates to a rewriting procedure for the corresponding cumulant term. In particular, an edge with weight $w_i$ appearing in any step of the graph algorithm can be associated with a matrix $M^{(i)}$ with $\|M^{(i)}\|\leq CN^{w_i}$.
\end{proof}

\begin{remark}
Lemma~\ref{rewriting-cumu} extends to arbitrary $k\in\N$ and partitions $\pi\in\Pi_k$ by restricting the algorithm in Lemma~\ref{rewriting-graph} to the pairs of $C(\pi)$. The resulting graph~$(\smash{\widetilde{\Gamma}_{k'}(\pi)},\smash{\widetilde{C}(\pi)})$ satisfies~{(a)-(d)} with the properties (c) and (d) restricted to the pairs in~$\smash{\widetilde{C}(\pi)}$. Further,~\eqref{rewrite-goal} generalizes to the form
\begin{equation}\label{rewrite-goal-generalized}
\sum_{a_1,\dots,a_k}\prod_{B\in\pi}\kap{|B|}(a_ja_{j+1}|j\in B)=\sum_{b_1,\dots,b_{k'}}\Big(\prod_{B\in\pi'}\kap{|B|}(b_jb_{j+1}|j\in B)\prod_{i\in J}M^{(i)}_{b_{i_1},b_{i_2}}\Big)
\end{equation}
where none of the 2-cumulants on the right-hand side of~\eqref{rewrite-goal-generalized} involve internal indices and the indices $b_{i_1},b_{i_2}$ never appear in the same 2-cumulant.
\end{remark}

After the rewriting procedure, summations over $k'\leq k$ indices remain. Matching~\eqref{rewrite-goal}, we rename them as~${b_1,\dots, b_{k'}}$ to indicate that the rewriting step is completed.

\medskip
\underline{\smash{Step 2: Estimates via summing-in steps}}\\
Recall that the reduction algorithm in Lemma~\ref{rewriting-graph} terminates in one of two possible states: either $\smash{\widetilde{C}(\pi)}=\emptyset$ (if $\pi$ is non-crossing) or $\smash{\widetilde{C}(\pi)}$ contains at least two pairings (if $\pi$ is crossing). We estimate the corresponding contributions to~\eqref{2-cumu-term} separately.

\begin{lemma}[Leading Terms]\label{estimates1}
Let $k\geq2$ be even and $\pi\in\Pi_k$ be a non-crossing pairing. Then
\begin{displaymath}
N^{-k/2-1}\Big|\sum_{a_1,\dots,a_k}\prod_{B\in\pi}\kap{2}(a_ja_{j+1}|j\in B)\Big|\leq C.
\end{displaymath}
\end{lemma}

\begin{proof}
Whenever $\pi$ is non-crossing, the algorithm in Lemma~\ref{rewriting-graph} reduces $(\Gamma_k,C(\pi))$ to a single vertex $b_1$ connected to itself by an edge of weight $k/2$. By the rewriting part of Lemma~\ref{rewriting-cumu}, we thus have
\begin{displaymath}
\sum_{a_1,\dots,a_k}\prod_{B\in\pi}\kap{2}(a_ja_{j+1}|j\in B)=\sum_{b_1}M_{b_1,b_1}=\tr(M),
\end{displaymath}
where~$M$ denotes the matrix corresponding to the loop in $\smash{\widetilde{\Gamma}_{k'}(\pi)}$. Further,~${\|M\|\leq CN^{k/2}}$, giving $|\tr(M)|\leq CN^{k/2+1}$ and the claim.
\end{proof}

\begin{lemma}[Subleading Pairings]\label{estimates2}
Let $k\geq2$ be even and $\pi\in\Pi_k$ be a crossing pairing. Then
\begin{displaymath}
N^{-k/2-1}\Big|\sum_{a_1,\dots,a_k}\prod_{B\in\pi}\kap{2}(a_ja_{j+1}|j\in B)\Big|\leq CN^{-1/2}.
\end{displaymath}
\end{lemma}

\begin{proof}
Whenever $\pi$ is crossing, at least two pairings remain in $\smash{\widetilde{C}(\pi)}$ after the algorithm in Lemma~\ref{rewriting-graph} terminates. As the resulting term is claimed to be subleading, we aim to show~\eqref{CR} for the entire right-hand side of~\eqref{rewrite-goal}, possibly allowing an additional summation, i.e., another factor of~$\sqrt{N}$. Let $M^{(i)}$ denote a matrix that corresponds to an edge of weight $w_i$ in~$\smash{\widetilde{\Gamma}_{k'}(\pi)}$, i.e., $\|\smash{M^{(i)}}\|\leq CN^{w_i}$ following Lemma~\ref{rewriting-cumu}. By defining
\begin{equation}\label{def-vector-t}
\Big(\mathbf{x}^{[M^{(i)}]b_l}\Big)_{b_{l+1}}:=\frac{1}{N^j}M^{(i)}_{b_l,b_{l+1}},\ b_{l+1}=1,\dots,N
\end{equation}
with $l\in\{1,\dots,k'\}$ and $k'+1=1$, we obtain a vector that satisfies
\begin{equation}\label{uni-bound-vector}
\|\mathbf{x}^{[M^{(i)}]b_l}\|_2\leq C
\end{equation}
uniformly for any choice of~$b_l$. Going through the index set $J$, we rewrite every matrix element on the right-hand side of~\eqref{rewrite-goal} as a normalized vector and collect any separate powers of $N$ in front of the sum.

\medskip
Next, we sum in the vectors using the notation of Lemma~\ref{ext-red-rule-2-cumu}. Going through the newly introduced vectors~\eqref{def-vector-t} in increasing order of the corresponding~$l$, we replace
\begin{displaymath}
\sum_{b_{l+1}}\Big(\mathbf{x}^{[M^{(i)}]b_l}\Big)_{b_{l+1}}\kap{2}(b_{l+1}b_{l+2},b_mb_{m+1})=\kap{2}(\mathbf{x}^{[M^{(i)}]b_l}b_{l+2},b_mb_{m+1}),
\end{displaymath}
where $b_m\neq b_{l-1},b_l,b_{l+1},b_{l+2}$ denotes some other summation index. Since all vectors are defined from the rows of the respective matrices in~\eqref{def-vector-t}, we always involve the first index of an index pair and thus replace at most two indices in each~2-cumulant by a vector. As a consequence of Lemmas~\ref{rewriting-graph} and \ref{rewriting-cumu}, the resulting term neither involves summation over internal indices (cf. Step I) nor an index that occurs in the same~2-cumulant both as a superscript of a vector and as an argument (cf. Step III). In Fig.~\ref{fig4} we visualize the rewriting and summing in procedure for the term on the left of Fig.~\ref{fig2}. The small arrows in Fig.~\ref{fig4} point to the index that will be summed in. Again, edge weights that are equal to zero are left out.
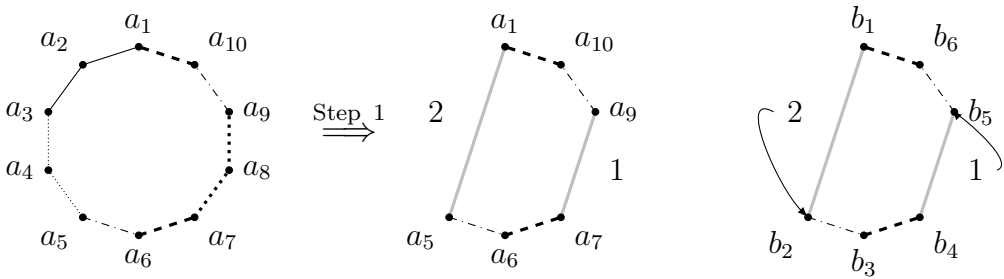
\begin{figure}[H]
\begin{center}
\begin{tikzpicture}[scale=1.25,baseline=(current bounding box.center)] 
\draw (0,1) node[above=1pt] {$a_1$};
\draw (-0.5878,0.809) node[above left=1pt] {$a_2$};
\draw (-0.951,0.309) node[left=1pt] {$a_3$};
\draw (-0.951,-0.309) node[left=1pt] {$a_4$};
\draw (-0.5878,-0.809) node[below left=1pt] {$a_5$};
\draw (0,-1) node[below=1pt] {$a_6$};
\draw (0.5878,-0.809) node[below right=1pt] {$a_7$};
\draw (0.951,-0.309) node[right=1pt] {$a_8$};
\draw (0.951,0.309) node[right=1pt] {$a_9$};
\draw (0.5878,0.809) node[above right=1pt] {$a_{10}$};
\draw[black] (0,1) -- (-0.5878,0.809);
\draw[black] (-0.5878,0.809) -- (-0.951,0.309);
\draw[black, densely dotted] (-0.951,0.309) -- (-0.951,-0.309);
\draw[black, densely dotted] (-0.951,-0.309) -- (-0.5878,-0.809);
\draw[black, dashdotted] (-0.5878,-0.809) -- (0,-1);
\draw[black,dashed,very thick] (0,-1) -- (0.5878,-0.809);
\draw[black,dotted,very thick] (0.5878,-0.809) -- (0.951,-0.309);
\draw[black,dotted,very thick] (0.951,-0.309) -- (0.951,0.309);
\draw[black, dashdotted] (0.951,0.309) -- (0.5878,0.809);
\draw[black,dashed,very thick] (0.5878,0.809) -- (0,1);
\filldraw [black] (0,1) circle (1pt)(-0.5878,0.809) circle (1pt)(-0.951,0.309) circle (1pt)(-0.951,-0.309) circle (1pt)(-0.5878,-0.809) circle (1pt);
\filldraw [black] (0,-1) circle (1pt)(0.5878,-0.809) circle (1pt)(0.951,-0.309) circle (1pt)(0.951,0.309) circle (1pt)(0.5878,0.809) circle (1pt);\end{tikzpicture}\hspace{0.4cm}\scalebox{1.1}{$\overset{\mathrm{Step\ 1}}{\Longrightarrow}$}
\begin{tikzpicture}[scale=1.25,baseline=(current bounding box.center)] 
\draw (0,1) node[above=1pt] {$a_1$};
\draw (-0.5878,-0.809) node[below left=1pt] {$a_5$};
\draw (0,-1) node[below=1pt] {$a_6$};
\draw (0.5878,-0.809) node[below right=1pt] {$a_7$};
\draw (0.951,0.309) node[right=1pt] {$a_9$};
\draw (0.5878,0.809) node[above right=1pt] {$a_{10}$};
\draw (-0.951,0.309) node[right=1pt] {2};
\draw (0.951,-0.309) node[right=1pt] {1};
\draw[lightgray,very thick] (0,1) -- (-0.5878,-0.809);
\draw[black,dashdotted] (-0.5878,-0.809) -- (0,-1);
\draw[black,very thick,dashed] (0,-1) -- (0.5878,-0.809);
\draw[lightgray,very thick] (0.5878,-0.809) -- (0.951,0.309);
\draw[black,dashdotted] (0.951,0.309) -- (0.5878,0.809);
\draw[black,very thick,dashed] (0.5878,0.809) -- (0,1);
\filldraw [black] (0,1) circle (1pt)(-0.5878,-0.809) circle (1pt);
\filldraw [black] (0,-1) circle (1pt)(0.5878,-0.809) circle (1pt)(0.951,0.309) circle (1pt)(0.5878,0.809) circle (1pt);\end{tikzpicture}\hspace{1cm}
\begin{tikzpicture}[scale=1.25,baseline=(current bounding box.center)] 
\draw (0,1) node[above=1pt] {$b_1$};
\draw (-0.5878,-0.809) node[below left=1pt] {$b_2$};
\draw (0,-1) node[below=1pt] {$b_3$};
\draw (0.5878,-0.809) node[below right=1pt] {$b_4$};
\draw (0.951,0.309) node[right=1pt] {$b_5$};
\draw (0.5878,0.809) node[above right=1pt] {$b_6$};
\draw (-0.951,0.309) node[right=1pt] {2};
\draw (0.951,-0.309) node[right=1pt] {1};
\draw[lightgray,very thick] (0,1) -- (-0.5878,-0.809);
\draw[black,dashdotted] (-0.5878,-0.809) -- (0,-1);
\draw[black,very thick,dashed] (0,-1) -- (0.5878,-0.809);
\draw[lightgray,very thick] (0.5878,-0.809) -- (0.951,0.309);
\draw[black,dashdotted] (0.951,0.309) -- (0.5878,0.809);
\draw[black,very thick,dashed] (0.5878,0.809) -- (0,1);
\draw [-{latex}, black] (-0.951,0.309) to [out=150] (-0.5878,-0.809);
\draw [-{latex reversed},black] (1.4,-0.309) to [out=20] (1,0.25);
\filldraw [black] (0,1) circle (1pt)(-0.5878,-0.809) circle (1pt);
\filldraw [black] (0,-1) circle (1pt)(0.5878,-0.809) circle (1pt)(0.951,0.309) circle (1pt)(0.5878,0.809) circle (1pt);
\end{tikzpicture}
\caption{Visualization of the rewriting and summing in for a crossing partition.}\label{fig4}
\end{center}
\end{figure}

\vspace{-0.7cm}
The term obtained after the sum-in procedure is now tractable by recursive summation as illustrated in Examples~\ref{ex-nointindices} and~\ref{ex-intindices}. We give the algorithm below. 
\begin{itemize}
\item[Step i] Identify the index $b_l$ in the sum with the smallest subscript~$l$. After the rewriting and sum-in steps, $b_l$ occurs in exactly two 2-cumulants. We distinguish two cases:
\begin{itemize}
\item Whenever both $b_lb_{l+1}$ and $b_{l-1}b_l$ occur as index pairs in different 2-cumulants, apply an estimate similar to~\eqref{start-sum} with $b_l$ in place of $a_1$, and start the procedure with the~2-cumulant that involves the pair~$b_lb_{l+1}$.
\item Whenever $b_l$ occurs as a superscript of a vector in one 2-cumulant and as an argument in another, start the procedure with the 2-cumulant that involves $b_l$ as an argument.
\end{itemize}
\item[Step ii] We distinguish two cases depending on the choice in Step i:
\begin{itemize}
\item Whenever the 2-cumulant does not involve any vectors, identify its four indices. Separate the summations over these indices from the remainder of the sum by estimating any occurrences of them in other 2-cumulants by a suitable maximum.
\item Whenever the 2-cumulant involves vectors, identify the indices that either occur as the superscript of a vector or as arguments. Treat any summation over indices that appear as arguments similar to the first case, then consider the maximum for the remaining indices to isolate the term from the remainder of the sum~(see~\eqref{eq-ex-withvectors}).
\end{itemize}
Next, perform the summations that have been isolated using the uniform bound~\eqref{uni-bound-vector} for any vectors that appear. Note that all estimates obtained from Lemmas~\ref{red-rule-2-cumu} and~\ref{ext-red-rule-2-cumu} follow~\eqref{CR}, i.e., the final bound in each iteration reduces to a counting argument.
\item[Step iii] Identify the index $b_{l'}$ in the sum which has the smallest subscript~$l'$ and carry out Step~ii for the 2-cumulant that involves $b_{l'}$ as an argument. Repeat the procedure until all summations have been carried out.
\end{itemize}

The above argument shows that all summations remaining after the rewriting and sum-in procedure follow~\eqref{CR}, possibly up to a factor of~$\sqrt{N}$ if a summation is added in Step~i. Hence, the claim follows by showing that~\eqref{CR} stays valid when the remaining summations as well as the factors~$N^{w_i}$ obtained from~\eqref{def-vector-t} are taken into account.
First, observe that~${\|T^j\|\leq CN^j}$, while a matrix element of~$T^j$ is obtained by evaluating
\begin{displaymath}
T^j_{a_l,a_{l+2j}}=\sum_{a_{l+1},\dots,a_{l+2j-1}}\kap{2}(a_la_{l+1},a_{l+1}a_{l+2})\dots\kap{2}(a_{l+2j-2}a_{l+2j-1},a_{l+2j-1}a_{l+2j})
\end{displaymath}
i.e., performing summations over~${2j-1}$ consecutive indices. Hence, these matrix elements yield a factor $\sqrt{N}$ more than prescribed by~\eqref{CR} if estimated trivially. Moreover, carrying out a rewriting step corresponding to Step II or III on the graph does not change this fact. Indeed, if the matrix elements of $M^{(1)}$ and~$M^{(2)}$ are obtained from carrying out~${2j_1-1}$ and~${2j_2-1}$ summations and the matrices are bounded by $CN^{j_1}$ and~$CN^{j_2}$ in norm, respectively, then
\begin{displaymath}
(M^{(1)}M^{(2)})_{a,c}=\sum_{b}M^{(1)}_{a,b}M^{(2)}_{b,c},
\end{displaymath}
accounts for~$2(j_1+j_2)-1$ summations while $\|M^{(1)}M^{(2)}\|\leq CN^{j_1+j_2}$. Similarly, if the matrix elements of~$M^{(i)}$ are obtained from carrying out $2j-1$ summations and $\|M^{(i)}\|\leq CN^j$, then the matrix elements of $M^{(i')}$ in~\eqref{new-matrix} account for two more summations, i.e., $2(j+1)-1$ in total, while the power of~$N$ in the norm bound is increased by one. Since replacing an index by a vector of the form~\eqref{def-vector-t} after the rewriting of the term also requires carrying out one summation, the additional factor of $\sqrt{N}$ per matrix is balanced out.

\medskip
Hence, at most $k+1$ powers of $\sqrt{N}$ are collected from bounding the~$k$ summations with the extra factor~$\sqrt{N}$ being obtained whenever an estimate similar to~\eqref{start-sum} is used in Step~i. Thus, the bound is of order $N^{-1/2}$ here, giving the claim and showing that all crossing pairings are subleading. 
\end{proof}

\vspace{-4mm}
\section{Proof of~\eqref{est-by-const} in the General Case}\label{sect-hi-cumu-estimates}
Let now $k\geq1$ be arbitrary. We aim to show that
\begin{equation}\label{k-cumu-term}
N^{-k/2-1}\Big|\sum_{a_1,\dots,a_k}\prod_{B\in\pi}\kap{|B|}(a_ja_{j+1}|j\in B)\Big|\leq C(k)
\end{equation}
for all $\pi\in\Pi_k$. This would complete the proof of~\eqref{est-by-const} via~\eqref{cumu-expansion}. We start by proving the necessary bounds for $j$-cumulants with~${j\geq3}$ and give the proof of~\eqref{k-cumu-term} in Section~\ref{sect-put-together}. Note that no internal index is summed~up below.
\begin{lemma}\label{red-rule-3-cumu}
Assume that \eqref{A3} holds. Then
\begin{align}
\sum_{a_1,a_2,a_3,a_4}|\kap{3}(a_1a_2,a_3a_4,a_5a_6)|&\leq C,\label{eq-rr3c-1}\\
\sum_{a_1,\dots,a_5}|\kap{3}(a_1a_2,a_3a_4,a_5a_6)|&\leq CN,\label{eq-rr3c-2}\\
\sum_{a_1,\dots,a_6}|\kap{3}(a_1a_2,a_3a_4,a_5a_6)|&\leq CN^2\label{eq-rr3c-3}
\end{align}
uniformly for any choice of the unsummed indices. In particular, the estimates follow~\eqref{CR}.
\end{lemma}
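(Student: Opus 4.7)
The plan is to reduce the 3-cumulant bound to a product of 2-cumulants via \eqref{A3-k-cumu} and then apply Lemma~\ref{red-rule-2-cumu} (together with its obvious symmetric companion $\sum_{a_3,a_4}|\kap{2}(a_1a_2,a_3a_4)|\leq C$, which follows from $d(a_1a_2,a_3a_4)=d(a_3a_4,a_1a_2)$) to each resulting sum.

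For three vertices $\alpha_1=a_1a_2,\alpha_2=a_3a_4,\alpha_3=a_5a_6$ the spanning trees on the complete graph $K_3$ are exactly the three pairs of edges. Since $|\kap{2}|$ is monotonically decreasing in $d$, the MST maximizes $\prod_{e\in T}|\kap{2}(e)|$ over all spanning trees $T$, so \eqref{A3-k-cumu} gives
\begin{align*}
|\kap{3}(\alpha_1,\alpha_2,\alpha_3)|\leq C(3)\big[&|\kap{2}(\alpha_1,\alpha_2)||\kap{2}(\alpha_2,\alpha_3)|+|\kap{2}(\alpha_1,\alpha_2)||\kap{2}(\alpha_1,\alpha_3)|\\
&+|\kap{2}(\alpha_1,\alpha_3)||\kap{2}(\alpha_2,\alpha_3)|\big].
\end{align*}
It then suffices to bound each of these three products under the summations in \eqref{eq-rr3c-1}--\eqref{eq-rr3c-3}.

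The recipe for each of the $3\times 3=9$ subcases is as follows. If the two 2-cumulants form a \emph{chain} (e.g.\ $|\kap{2}(\alpha_1,\alpha_2)||\kap{2}(\alpha_2,\alpha_3)|$), I factor the sum over the indices of $\alpha_1$ out of the second factor and estimate $\sum_{a_1,a_2}|\kap{2}(a_1a_2,a_3a_4)|\leq C$ uniformly in $a_3,a_4$, then apply the appropriate bound from Lemma~\ref{red-rule-2-cumu} to the remaining sum over the indices of $\alpha_2,\alpha_3$. If the two 2-cumulants \emph{share a pair} (e.g.\ $|\kap{2}(\alpha_1,\alpha_2)||\kap{2}(\alpha_1,\alpha_3)|$), I factor out the sum over the pair that appears in only one 2-cumulant (here $a_3,a_4$) using $\sum_{a_3,a_4}|\kap{2}(a_1a_2,a_3a_4)|\leq C$ uniformly in $a_1,a_2$, and apply Lemma~\ref{red-rule-2-cumu} to the outer sum involving the shared pair $a_1a_2$ together with whichever indices of $\alpha_3$ remain under summation.

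The main (and only) technical subtlety is the \emph{order} of factorization in the second case: a naive separation of the two factors loses a power of $N$, since summing over the shared pair $(a_1,a_2)$ produces $N^2$ while we are only entitled to a factor $N$ or less. The remedy is to keep the shared pair tied to the 2-cumulant that still contains an unsummed external index and invoke the stronger bound $\sum_{a_1,a_2,a_3}|\kap{2}(a_1a_2,a_3a_4)|\leq CN$ from Lemma~\ref{red-rule-2-cumu} on the outer sum. For instance, for \eqref{eq-rr3c-2} applied to $|\kap{2}(a_1a_2,a_3a_4)||\kap{2}(a_1a_2,a_5a_6)|$, one writes
\begin{equation*}
\sum_{a_1,\dots,a_5}|\kap{2}(a_1a_2,a_3a_4)||\kap{2}(a_1a_2,a_5a_6)|=\sum_{a_1,a_2,a_5}|\kap{2}(a_1a_2,a_5a_6)|\Big(\sum_{a_3,a_4}|\kap{2}(a_1a_2,a_3a_4)|\Big)
\end{equation*}
and bounds the inner sum by $C$ and the outer by $CN$. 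The same principle applied to the nine subcases yields the claimed bounds $C$, $CN$, $CN^2$ in \eqref{eq-rr3c-1}--\eqref{eq-rr3c-3}, and the validity of \eqref{CR} in each case follows by comparing the number of summed indices to the resulting power of $N$.
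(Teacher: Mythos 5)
Your proposal is correct and follows essentially the same route as the paper: apply \eqref{A3-k-cumu}, enumerate the three spanning trees of $K_3$, and bound each resulting product of two 2-cumulants by factoring the summations so that Lemma~\ref{red-rule-2-cumu} applies, with the shared-pair trees handled exactly as you describe. One small remark: the claim that the minimal spanning tree \emph{maximizes} $\prod_{e\in T}|\kap{2}(e)|$ is neither needed nor quite justified (only the upper bound on $|\kap{2}|$ is monotone in $d$, and minimizing $\sum_{e}d(e)$ need not maximize the product of those bounds); it suffices that $T_{min}$ is one of the three trees, so its product is dominated by the sum of the three nonnegative products.
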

\begin{proof}
Applying~\eqref{A3-k-cumu} to the 3-cumulant in~\eqref{eq-rr3c-1} and taking the maximum in a suitable way in the resulting terms, we obtain, e.g.,
\begin{align*}
&\sum_{a_1,\dots,a_4}|\kap{2}(a_1a_2,a_3a_4)\kap{2}(a_3a_4,a_5a_6)|\leq \Big(\max_{a_3,a_4}\sum_{a_1,a_2}|\kap{2}(a_1a_2,a_3a_4)|\Big)\sum_{a_3,a_4}|\kap{2}(a_3a_4,a_5a_6)|,
\end{align*}
as well as similar bounds for the terms corresponding to the other possible spanning trees. Hence, the summation over up to two index pairs is bounded by a constant, giving~\eqref{eq-rr3c-1}. The remaining estimates~\eqref{eq-rr3c-2} and~\eqref{eq-rr3c-3} readily follow, as the additional summations yield a factor of $N$ or $N^2$, respectively.
\end{proof}
Observe that the bounds given in~\eqref{eq-rr3c-1} and~\eqref{eq-rr3c-2} imply that, e.g., 
\begin{align}
\sum_{a_2}|\kap{3}(a_1a_2,a_2a_3,a_4a_5)|\leq\sum_{a_2,a_2'}|\kap{3}(a_1a_2,a_2'a_3,a_4a_5)|\leq C\leq C\sqrt{N}\label{dummy-ind}
\end{align}
uniformly for any choice of the unsummed indices. In particular, the estimates comply with~\eqref{CR} and there is no need to perform summations over internal indices separately as in Section~\ref{sect-2-cumu-estimates}. The only exception occurs for $k=3$, where one obtains
\begin{equation}\label{k3-term}
\sum_{a_1,a_2,a_3}|\kap{3}(a_1a_2,a_2a_3,a_3a_1)|\leq\sum_{a_1,a_2,a_3,a_1',a_2',a_3'}|\kap{3}(a_1a_2,a_2'a_3,a_3'a_1')|\leq CN^2
\end{equation}
instead of the bound of order $N^{3/2}$ prescribed by the counting rule. As~\eqref{k3-term} is the only term for $k=3$ due to \eqref{A1}, and $k/2+1=5/2$, it follows that~\eqref{est-by-const} holds for $k=3$. Hence, we can exclude the $k=3$  case from the following analysis and assume that all estimates for~3-cumulants with or without internal indices comply with~\eqref{CR}. We further note the following definition.
\begin{definition}
For $\mathbf{x}\in\R^{N}$, set
\begin{equation}\label{3-cumu-vector}
\kap{3}(\mathbf{x}a_2,a_3a_4,a_5a_6):=\sum_{a_1}\kap{3}(a_1a_2,a_3a_4,a_5a_6)x_{a_1}
\end{equation}
and define any 3-cumulants with one or more indices replaced by a vector analogously.
\end{definition}

Again, we follow the convention that vectors only occur as the first index of every index pair. Similarly to Lemma~\ref{ext-red-rule-2-cumu}, we obtain the following estimates.

\begin{lemma}\label{ext-red-rule-3-cumu}
Assume that \eqref{A3} holds and let $\mathbf{x},\mathbf{y},\mathbf{z}\in\R^N$. Then the summation over any number of indices of~$|\kap{3}(\mathbf{x}a_2,a_3a_4,a_5a_6)|$, $|\kap{3}(\mathbf{x}a_2,\mathbf{y}a_4,a_5a_6)|$, or~$|\kap{3}(\mathbf{x}a_2,\mathbf{y}a_4,\mathbf{z}a_6)|$ satisfies~\eqref{CR}. Thus, for one vector we, e.g., have
\begin{align}
\sum_{a_2}|\kap{3}(\mathbf{x}a_2,a_3a_4,a_5a_6)|&\leq CN^{1/2}\|\mathbf{x}\|_2,\label{eq-err3c-1}\\
\sum_{a_2,a_4}|\kap{3}(\mathbf{x}a_2,a_3a_4,a_5a_6)|&\leq CN\|\mathbf{x}\|_2\label{eq-err3c-2}
\end{align}
uniformly for any choice of the unsummed indices. Similar bounds hold if two or three vectors are involved, respectively, e.g.,
\begin{align}
\sum_{a_5}|\kap{3}(\mathbf{x}a_2,\mathbf{y}a_4,a_5a_6)|&\leq CN^{1/2}\|\mathbf{x}\|_2\|\mathbf{y}\|_2,\label{eq-err3c-5}\\
\sum_{a_2}|\kap{3}(\mathbf{x}a_2,\mathbf{y}a_4,\mathbf{z}a_6)|&\leq CN^{1/2}\|\mathbf{x}\|_2\|\mathbf{y}\|_2\|\mathbf{z}\|_2.\nonumber
\end{align}
Moreover, we have bounds that are stronger than~\eqref{CR} by a factor of~$\sqrt{N}$ for every summation that is carried out over two consecutive indices belonging to different index pairs,~e.g.,
\begin{align}
\sum_{a_4,a_5}|\kap{3}(\mathbf{x}a_2,a_3a_4,a_5a_6)|&\leq CN^{1/2}\|\mathbf{x}\|_2.\label{eq-err3c-4}\\
\sum_{a_2,\dots,a_6}|\kap{3}(\mathbf{x}a_2,a_3a_4,a_5a_6)|&\leq CN^{3/2}\|\mathbf{x}\|_2,\label{eq-err3c-3}\\
\sum_{a_4,a_5}|\kap{3}(\mathbf{x}a_2,\mathbf{y}a_4,a_5a_6)|&\leq CN^{1/2}\|\mathbf{x}\|_2\|\mathbf{y}\|_2.\label{eq-err3c-6}
\end{align}
In particular, for 3-cumulants, a summation over an internal index also contributes at most a factor of~$\sqrt{N}$.
\end{lemma}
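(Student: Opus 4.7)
The plan is to mimic Lemma~\ref{ext-red-rule-2-cumu}, reducing each bound on $|\kap{3}|$ to $|\kap{2}|$-bounds via the spanning tree estimate~\eqref{A3-k-cumu} and then invoking Lemmas~\ref{red-rule-2-cumu} and~\ref{ext-red-rule-2-cumu}. Since $T_{min}$ on the complete graph with vertices $\alpha_1,\alpha_2,\alpha_3$ is one of only three spanning trees (each a path $\alpha_i$---$\alpha_j$---$\alpha_k$), we may estimate
\[
|\kap{3}(\alpha_1,\alpha_2,\alpha_3)|\leq C\sum_{T}\prod_{e\in T}|\kap{2}(e)|,
\]
where the sum runs over the three possible spanning trees and each summand is a product of two 2-cumulants sharing one index pair. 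This reduces the 3-cumulant estimates to sums over products of two 2-cumulants to which Lemmas~\ref{red-rule-2-cumu} and~\ref{ext-red-rule-2-cumu} apply directly.

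For each estimate I would first expand the vector insertions by $|\kap{3}(\mathbf{x}a_2,\dots)|\leq\sum_{a_1}|x_{a_1}||\kap{3}(a_1a_2,\dots)|$, and analogously for two or three vectors, then substitute the spanning tree bound. For each of the three resulting tree contributions, the summations are carried out in an order that pairs each vector index with a 2-cumulant containing it: a partial sum of the form $\sum_{a_1}|x_{a_1}||\kap{2}(a_1a_2,\dots)|$ is bounded by the appropriate entry of Lemma~\ref{ext-red-rule-2-cumu} (whose proof goes through unchanged with $|x_{a_1}|$ in place of $x_{a_1}$), and any residual vector summation is handled at the end via $\sum_{a_1}|x_{a_1}|\leq\sqrt{N}\|\mathbf{x}\|_2$.

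This scheme yields the~\eqref{CR}-compliant bounds directly, since each of the three trees contributes at the claimed order. For the stronger $\sqrt{N}$-improved bounds~\eqref{eq-err3c-4},~\eqref{eq-err3c-3},~\eqref{eq-err3c-6}, the gain originates from Lemma~\ref{red-rule-2-cumu}: whenever the summation runs over two consecutive indices from different index pairs, in each spanning tree one can arrange that the relevant 2-cumulant factor contains both summed indices and thus produces an $O(1)$ bound rather than the $O(N)$ factor that~\eqref{CR} would allow for two summations. The final claim on internal indices follows because, in contrast to the 2-cumulant situation treated in Section~\ref{sect-2-cumu-estimates}, the bounds in Lemma~\ref{red-rule-3-cumu} remain~\eqref{CR}-compliant even when an internal index is summed, as in~\eqref{dummy-ind}.

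The main obstacle is the bookkeeping: the lemma covers zero, one, two, and three vector insertions combined with various placements of summed indices, and for each combination all three spanning trees have to be examined. A subtler point concerns the stronger bounds with several vectors such as~\eqref{eq-err3c-6}: a crude per-tree estimate that uses only $\|\mathbf{x}\|_\infty\leq\|\mathbf{x}\|_2$ together with a uniform kernel bound recovers just~\eqref{CR} and misses the improvement. The $\sqrt{N}$ gain is retrieved only by interleaving the vector reductions with the 2-cumulant summations, absorbing both vector norms into a Lemma~\ref{ext-red-rule-2-cumu}-type estimate before the $\sqrt{N}$-improving summation is performed.
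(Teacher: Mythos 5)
Your proposal follows essentially the same route as the paper's proof: reduce the 3-cumulant via~\eqref{A3-k-cumu} to the three spanning-tree products of two 2-cumulants, get the \eqref{CR}-compliant bounds by estimating vector entries with $\max_j|x_j|\leq\|\mathbf{x}\|_2$ and invoking Lemma~\ref{red-rule-3-cumu}, recover the extra $\sqrt{N}$ in~\eqref{eq-err3c-4}--\eqref{eq-err3c-6} by absorbing the vectors into Cauchy--Schwarz estimates of Lemma~\ref{ext-red-rule-2-cumu} type, and handle internal indices by dummy labels as in~\eqref{dummy-ind}. One small caveat: your stated mechanism for the gain (``the relevant 2-cumulant factor contains both summed indices and thus produces an $O(1)$ bound'') is not accurate for every tree, since summing one index from each pair of a single 2-cumulant costs $O(N)$ by Lemma~\ref{red-rule-2-cumu}, but your closing paragraph correctly identifies the interleaved Lemma~\ref{ext-red-rule-2-cumu}-type estimate as the true source of the improvement, which is exactly what the paper does.
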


\begin{proof}
The proof is divided into three general arguments. First, as the bounds obtained in Lemma~\ref{red-rule-3-cumu} are stronger than~\eqref{CR}, estimating the vector elements by $\max_j|v_j|\leq\|\mathbf{v}\|_2$ for~${\mathbf{v}=\mathbf{x},\mathbf{y},\mathbf{z}}$, respectively, and performing the summation directly yields the desired estimate in most cases. Whenever this argument does not yield a sufficiently strong bound, we apply the Cauchy-Schwarz inequality similar to the proof of Lemma~\ref{ext-red-rule-2-cumu}. Lastly, it remains to check that the estimates obtained are strong enough to satisfy the second part of the lemma and also comply with~\eqref{CR} if summation over internal indices is included. As the latter can be treated by introducing additional summation labels similar to~\eqref{dummy-ind}, it is enough to derive suitable bounds for distinct $a_1,\dots,a_6$.

\medskip
Assume first that the 3-cumulant involves only one vector. Here, estimating $|x_j|\leq\|\mathbf{x}\|_2$ and applying~\eqref{eq-rr3c-1} yields
\begin{align}
\sum_{a_2,a_3,a_4}|\kap{3}(\mathbf{x}a_2,a_3a_4,a_5a_6)|&\leq\|\mathbf{x}\|_2\sum_{a_1,\dots,a_4}|\kap{3}(a_1a_2,a_3a_4,a_5a_6)|\leq C\|\mathbf{x}\|_2,\label{eq-err3cp-1}
\end{align}
which immediately implies~\eqref{eq-err3c-1} and~\eqref{eq-err3c-2}. The same $N$-independent bound as~\eqref{eq-err3cp-1} holds if one sums over~$a_2,a_5,a_6$ instead. Whenever all three index pairs are involved in the summation, Lemma~\ref{red-rule-3-cumu} implies a bound of order~$N$,~i.e.,
\begin{align}
\sum_{a_2,\dots,a_5}|\kap{3}(\mathbf{x}a_2,a_3a_4,a_5a_6)|&\leq\|\mathbf{x}\|_2\sum_{a_1,\dots,a_5}|\kap{3}(a_1a_2,a_3a_4,a_5a_6)|\leq CN\|\mathbf{x}\|_2,\label{eq-err3cp-2}
\end{align}
and the same estimate holds if the summation over some other index $a_2,\dots,a_5$ is left out instead of $a_6$. In particular, all summations over two, three and four distinct indices yield a bound of at most order $N$, which complies with~\eqref{CR}. The validity of~\eqref{CR} for summation over five distinct indices follows from~\eqref{eq-rr3c-3}.

\medskip
Now we prove~\eqref{eq-err3c-4} and~\eqref{eq-err3c-3} by applying~\eqref{A3-k-cumu} and estimating one 2-cumulant in the bound similar to~\eqref{1sum-1vector} in the proof of Lemma~\ref{ext-red-rule-2-cumu}. This leads to
\begin{align*}
&\sum_{a_1,a_3,a_5}|x_{a_1}\kap{2}(a_1a_2,a_3a_4)\kap{2}(a_3a_4,a_5a_6)|\\
&\leq\Big(\max_{a_3}\sum_{a_5}|\kap{2}(a_3a_4,a_5a_6)|\Big)\sum_{a_1,a_3}|x_{a_1}\kap{2}(a_1a_2,a_3a_4)|\leq CN^{1/2}\|\mathbf{x}\|_2,
\end{align*}
which, together with similar estimates obtained for the other possible spanning trees on the complete graph on three vertices, proves~\eqref{eq-err3c-4}. The estimate in~\eqref{eq-err3c-3} follows similarly. Applying~\eqref{dummy-ind} and~\eqref{eq-err3cp-2} implies the remaining stronger bounds for the second part of the lemma, which completes the proof for $\kap{3}(\mathbf{x}a_2,a_3a_4,a_5a_6)$.

\medskip
Next, assume that two vectors are involved in~$\kap{3}$. Here, estimating $|x_j|\leq\|\mathbf{x}\|_2$ and~${|y_j|\leq\|\mathbf{y}\|_2}$, applying~\eqref{eq-rr3c-1} yields
\begin{align}
\sum_{a_2,a_4}|\kap{3}(\mathbf{x}a_2,\mathbf{y}a_4,a_5a_6)|&\leq\|\mathbf{x}\|_2\|\mathbf{y}\|_2\sum_{a_1,\dots,a_4}|\kap{3}(a_1a_2,a_3a_4,a_5a_6)|\leq C\|\mathbf{x}\|_2\|\mathbf{y}\|_2.\label{eq-err3cp-3}
\end{align}
Further, arguing similarly to~\eqref{1sum-1vector} gives
\begin{align*}
&\sum_{a_1,a_3,a_5}|x_{a_1}y_{a_3}\kap{2}(a_3a_4,a_5a_6)\kap{2}(a_5a_6,a_1a_2)|\leq CN^{1/2}\|\mathbf{x}\|_2\|\mathbf{y}\|_2,
\end{align*}
which, together with bounds of at most order $N^{1/2}$ for the terms corresponding to the other possible spanning trees, implies~\eqref{eq-err3c-5}. An analogous estimate holds for summation over $a_6$, showing the validity of~\eqref{CR} for a single summation. The remaining cases for summation over distinct indices follow similar to~\eqref{eq-err3cp-3} by applying~\eqref{eq-rr3c-2} and~\eqref{eq-rr3c-3}.

\medskip
Note, however, that the above bounds are again not sufficient to imply~\eqref{CR} for summation over internal indices. Arguing as in~\eqref{1sum-2vectors} yields the stronger estimate
\begin{align*}
&\sum_{a_1,a_3,a_4,a_5}|x_{a_1}y_{a_3}\kap{2}(a_1a_2,a_3a_4)\kap{2}(a_3a_4,a_5a_6)|\leq CN^{1/2}\|\mathbf{x}\|_2\|\mathbf{y}\|_2.
\end{align*}
Again, one can estimate similarly for the terms corresponding to the other possible spanning trees, which implies~\eqref{eq-err3c-6}. Lastly, one needs to check that also
\begin{align*}
\sum_{a_2,a_4,a_5}|\kap{3}(\mathbf{x}a_2,\mathbf{y}a_4,a_4a_5)|&\leq\sum_{a_2,a_4,a_5,a_4'}|\kap{3}(\mathbf{x}a_2,\mathbf{y}a_4,a_4'a_5)|\leq CN^{3/2}\|\mathbf{x}\|_2\|\mathbf{y}\|_2,
\end{align*}
which follows from a similar argument. This completes the proof for~$\kap{3}(\mathbf{x}a_2,\mathbf{y}a_4,a_5a_6)$.

\medskip
For 3-cumulants that involve three vectors, note that writing out the term similar to~\eqref{3-cumu-vector} already involves all three index pairs. Whenever two summations are carried out, the estimates obtained from Lemma~\ref{red-rule-3-cumu} comply with~\eqref{CR},~e.g.,
\begin{align*}
\sum_{a_2,a_4}|\kap{3}(\mathbf{x}a_2,\mathbf{y}a_4,\mathbf{z}a_6)|&\leq\|\mathbf{x}\|_2\|\mathbf{y}\|_2\|\mathbf{z}\|_2\sum_{a_1,\dots,a_5}|\kap{3}(a_1a_2,a_3a_4,a_5a_6)|\leq CN\|\mathbf{x}\|_2\|\mathbf{y}\|_2\|\mathbf{z}\|_2,
\end{align*}
but a different argument is needed for estimating zero, one, or three summations. Here, arguing as in~\eqref{1sum-2vectors} gives
\begin{align*}
&\sum_{a_2}\sum_{a_1,a_3,a_5}|x_{a_1}y_{a_3}z_{a_5}\kap{2}(a_1a_2,a_3a_4)\kap{2}(a_3a_4,a_5a_6)|\\
&\leq\|\mathbf{z}\|_2\Big(\max_{a_3}\sum_{a_5}|\kap{2}(a_3a_4,a_5a_6)|\Big)\sum_{a_1,a_2,a_3}|x_{a_1}y_{a_3}\kap{2}(a_1a_2,a_3a_4)|\leq CN^{1/2}\|\mathbf{x}\|_2\|\mathbf{y}\|_2\|\mathbf{z}\|_2
\end{align*}
together with similar bounds for the terms corresponding to the other possible spanning trees. By a similar argument, we obtain an $N$-independent bound and a bound of order $N$ for zero and three summations, respectively. As internal indices cannot occur here, the proof of the lemma is complete.
\end{proof}
Lastly, we derive the necessary estimates to incorporate cumulants of order four and higher. As no exceptions similar to~\eqref{k3-term} occur, we directly include summations over internal indices into the bound.
\begin{lemma}\label{est-high-cumu}
Assume that \eqref{A3} holds and let $j\geq4$. Then
\begin{align*}
\sum_{a_1,\dots,a_{2j}}|\kap{j}(a_1a_2,\dots,a_{2j-1}a_{2j})|\leq CN^2.
\end{align*}
\end{lemma}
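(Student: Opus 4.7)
The plan is to combine the cumulant-decay bound \eqref{A3-k-cumu} with the observation that any spanning tree admits a natural leaf-stripping order. The only subtlety is that the minimal spanning tree $T_{min}$ in \eqref{A3-k-cumu} depends on the configuration $(\alpha_1,\dots,\alpha_j)$, which interferes with summing over the indices $a_i$. I would remove this dependence by the trivial pointwise bound
\[
\prod_{e\in T_{min}}|\kap{2}(e)|\;\leq\;\sum_{T}\prod_{e\in T}|\kap{2}(e)|,
\]
where the sum ranges over all labelled spanning trees on the vertex set $\{\alpha_1,\dots,\alpha_j\}$. By Cayley's formula there are exactly $j^{j-2}$ such trees, so this step only produces a $j$-dependent constant factor. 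Together with \eqref{A3-k-cumu}, the problem reduces to proving, for every fixed spanning tree $T$ on $j$ vertices, the estimate
\[
\sum_{a_1,\dots,a_{2j}}\prod_{e\in T}|\kap{2}(e)|\;\leq\;CN^2.
\]

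For a fixed tree $T$ I would exploit the fact that any tree on $j>1$ vertices has a leaf. Iteratively picking a leaf and removing it together with its unique incident edge yields an ordering $e_1,\dots,e_{j-1}$ of the edges such that, for each $m$, the edge $e_m=(\alpha_{i_m},\alpha_{k_m})$ is incident to a vertex $\alpha_{i_m}$ that is a leaf of the subtree with edges $\{e_m,\dots,e_{j-1}\}$. Processing the edges in this order, at step $m$ the two indices $a_{2i_m-1},a_{2i_m}$ appear only in the factor $|\kap{2}(\alpha_{i_m},\alpha_{k_m})|$: by the leaf property they do not occur in any edge with index $\geq m+1$, and all factors with index $<m$ have already been summed away. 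Hence the second estimate of Lemma~\ref{red-rule-2-cumu} gives
\[
\sum_{a_{2i_m-1},a_{2i_m}}|\kap{2}(a_{2i_m-1}a_{2i_m},a_{2k_m-1}a_{2k_m})|\;\leq\;C
\]
uniformly in the unsummed indices. After $j-1$ such steps one vertex $\alpha_{i_j}$ remains, and the naive summation over its two free indices contributes a factor $N^2$, giving $C^{j-1}N^2$ for each fixed tree.

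Combining everything yields the final bound $C(j)\cdot j^{j-2}\cdot C^{j-1}N^2\leq C(j)N^2$, as required. I do not anticipate any serious technical obstacle here: the argument is essentially a bookkeeping exercise on trees combined with the elementary estimate of Lemma~\ref{red-rule-2-cumu}. The one point that could easily be overlooked is the configuration dependence of $T_{min}$, which is precisely why the domination by the sum over all spanning trees should be carried out \emph{before} any summation in the $a_i$ is performed. Note that internal indices cannot arise within a single $j$-cumulant (the $2j$ labels are a priori distinct), so no separate treatment analogous to Section~\ref{sect-2-cumu-estimates} is needed here.
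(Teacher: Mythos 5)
Your proposal is correct and follows essentially the same route as the paper: apply \eqref{A3-k-cumu}, dominate the configuration-dependent minimal spanning tree by the sum over all spanning trees on $j$ vertices, and then sum along a leaf-to-root order so that each edge contributes an $N$-independent factor via Lemma~\ref{red-rule-2-cumu}, with the final $N^2$ coming from the root (the paper sums all four indices of the last edge at once via the last estimate of Lemma~\ref{red-rule-2-cumu}, which is equivalent to your splitting into a constant for the last leaf plus a trivial $N^2$ for the root). Your explicit treatment of the $T_{min}$ dependence and the leaf-stripping bookkeeping simply spells out what the paper's terser proof leaves implicit.
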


\begin{proof}
After applying~\eqref{A3-k-cumu}, bound the term by considering all possible spanning trees on the complete graph on $j$ vertices. Starting the summation at the leaves of the respective tree, sum up one index pair of each 2-cumulant. Continuing the procedure until the root of the tree is reached, an $N$-independent bound is obtained in every step. Finally, a summation over all four indices of the last 2-cumulant remains, which yields the claimed bound of order $N^2$ by applying the last estimate of Lemma~\ref{red-rule-2-cumu}. 
\end{proof}

Note that Lemma~\ref{est-high-cumu} in particular implies that
\begin{displaymath}
\sum_{a_1,\dots,a_j}|\kap{j}(a_1a_2,a_2a_3,a_3a_4,\dots,a_ja_1)|\leq\sum_{a_1,\dots,a_j,a_1',\dots,a_j'}|\kap{j}(a_1a_2,a_2'a_3,\dots,a_j'a_1')|\leq CN^2
\end{displaymath}
for any $j\geq4$ such that no special treatment is needed for~\eqref{CR}. Moreover, whenever $j\geq5$, the bound is stronger than~\eqref{CR}. However, Lemma~\ref{est-high-cumu} only gives a bound of $N^2$ independent of the number of summations carried out.

\vspace{-2mm}
\subsection{Including 3-cumulants}\label{sect-put-together}
Throughout this section, assume that $\pi\in\Pi_k$ with $|B|\in\{2,3\}$ for all $B\in\pi$. Excluding the cases in Lemma~\ref{2-cumu-lemma}, we obtain the following bound.

\begin{lemma}[Subleading Term with 3-cumulants]\label{2-3-cumu-bound}
Under assumptions \eqref{A1}-\eqref{A3}, let $k\geq1$ and choose a partition $\pi\in\Pi_k$ with~${|B|\in\{2,3\}}$ for all $B\in\pi$ that is not a pairing. Then
\begin{equation}\label{2-3-cumu-term}
N^{-k/2-1}\sum_{a_1,\dots,a_k}\prod_{B\in\pi}|\kap{|B|}(a_ja_{j+1}|j\in B)|\leq CN^{-1/2}.
\end{equation}
\end{lemma}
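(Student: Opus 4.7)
The plan is to mimic the two-step strategy from the proof of~\eqref{2-cumu-term}, with the key new input being Lemmas~\ref{red-rule-3-cumu} and~\ref{ext-red-rule-3-cumu}. The decisive observation that makes this adaptation routine is that, unlike for 2-cumulants (where a summation over an internal index may violate~\eqref{CR} and therefore forced us to introduce the matrices $T$ and $T^{[j]}$), a summation over an internal index inside a 3-cumulant is already controlled by~\eqref{dummy-ind}, and more generally every bound in Lemma~\ref{ext-red-rule-3-cumu} respects~\eqref{CR}. Hence no additional preprocessing is needed for the 3-cumulant parts of the graph.

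First I would perform the rewriting procedure from Step~1 of Section~\ref{sect-2-cumu-estimates} only along the edges of $\Gamma_k$ that belong to 2-cumulant pairings, treating the three edges of each 3-cumulant as boundaries that the reduction algorithm refuses to cross. Step~I merges only adjacent 2-cumulant pairings, while Steps~II and~III only combine weighted edges or a weighted edge with an adjacent 2-cumulant pairing. The outcome is a reduced weighted graph with three kinds of edges: matrix elements of products built from $T$ and $T^{[j]}$, remaining 2-cumulant paired edges arising from crossings, and the three edges of each unchanged 3-cumulant. Algebraically, the sum collapses to a sum over the remaining indices of a product of matrix elements, 2-cumulants and 3-cumulants.

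Next I would carry out the sum-in procedure as in Section~\ref{sect-2-cumu-estimates}: for every matrix appearing in the reduced graph, introduce the vector $\mathbf{x}^{[M,j]b_l}$ defined in~\eqref{def-vector-t}, which is uniformly bounded in norm by~\eqref{uni-bound-vector}, and sum the corresponding row into the adjacent (2- or 3-) cumulant, thereby replacing the first index of one index pair by a vector. Lemma~\ref{ext-red-rule-3-cumu} is the crucial ingredient here, as it explicitly treats 3-cumulants with one, two, or three vector slots and shows that every such object still obeys~\eqref{CR}, which is exactly what is needed when a single 3-cumulant sits adjacent to several matrix elements coming from disjoint 2-cumulant chains. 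Afterwards I would run the recursive summation algorithm from Steps~i--iii of Section~\ref{sect-2-cumu-estimates}: locate the smallest surviving index $b_l$, isolate the unique cumulant containing $b_l$ as an argument, and bound the inner sum either by Lemma~\ref{red-rule-2-cumu} or~\ref{ext-red-rule-2-cumu} if the cumulant is a (possibly vectorized) 2-cumulant, or by Lemma~\ref{red-rule-3-cumu} or~\ref{ext-red-rule-3-cumu} if it is a (possibly vectorized) 3-cumulant.

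For the counting, each of the at most $k$ summations contributes at most $\sqrt{N}$ by~\eqref{CR}, and one may have to spend an additional $\sqrt{N}$ on an overestimate in the style of~\eqref{start-sum} to break any cyclic structure before starting. As in Section~\ref{sect-2-cumu-estimates}, the extra factor of $\sqrt{N}$ carried by every power of $T$ or $T^{[j]}$ is precisely compensated by the summation performed during the associated sum-in step, so collecting at most $k+1$ factors of $\sqrt{N}$ and combining with the prefactor $N^{-k/2-1}$ yields~\eqref{2-3-cumu-term}. The main obstacle in the argument is the bookkeeping in the case when one 3-cumulant acquires several vector arguments simultaneously; the reason this still goes through is precisely that Lemma~\ref{ext-red-rule-3-cumu} was formulated with vectors in any subset of index-pair slots, so that the recursive summation step can be applied essentially verbatim.
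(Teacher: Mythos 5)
Your overall architecture agrees with the paper's proof: restrict the reduction algorithm to the 2-cumulant pairings, sum the resulting matrices into adjacent cumulants via the vectors from~\eqref{def-vector-t}, and run the recursive summation with Lemmas~\ref{red-rule-3-cumu} and~\ref{ext-red-rule-3-cumu} supplying the new input. However, there is a genuine gap in your sum-in step. The problematic configuration is not ``one 3-cumulant adjacent to several matrix elements coming from disjoint 2-cumulant chains'' (which Lemma~\ref{ext-red-rule-3-cumu} indeed handles), but a weighted edge whose two neighbouring edges belong to the \emph{same} 3-tuple. Since you (correctly) do not extend Step~III of the reduction algorithm to 3-cumulants, such configurations survive the rewriting. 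If you then define $\mathbf{x}^{[M,j]b_l}$ and sum over $b_{l+1}$ into the 3-cumulant, you produce a term of the form $\kap{3}(\dots b_l,\mathbf{x}^{[M,j]b_l}b_{l+2},\dots)$ in which $b_l$ appears both as the superscript of the vector and as an argument of the same cumulant --- exactly the failure mode exhibited in the second part of Example~\ref{ex-intindices}. Lemma~\ref{ext-red-rule-3-cumu} is stated for fixed vectors and cannot be invoked for the summation over $b_l$, because the vector varies with $b_l$; for 2-cumulants the paper escapes this via the matrices $T^{[j]}$, but no analogous absorption is available once a 3-cumulant is involved.

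The paper resolves this by \emph{not} summing in these matrix elements: it groups each such matrix element together with its 3-cumulant into a term of type 1, 2 or 3 (Fig.~\ref{fig5}), estimates the matrix element trivially by its operator norm --- an overshoot of $\sqrt{N}$ relative to \eqref{CR} per matrix --- and recovers this loss from the bounds for 3-cumulants that are strictly stronger than \eqref{CR}, such as \eqref{eq-rr3c-1}, \eqref{eq-err3c-4} and \eqref{eq-err3c-6}, combined with a modified summation rule (Fig.~\ref{fig6}) that leaves one or two indices of each such subgraph unsummed for later steps. Your proposal needs this additional case analysis; without it the recursive summation cannot be closed for partitions containing these subgraphs, and the claim that the procedure applies ``essentially verbatim'' does not hold.
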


\begin{proof}[Proof of Lemma~\ref{2-3-cumu-bound}]
We extend the proof Lemma~\ref{estimates2} to include 3-cumulants and show that~\eqref{CR} holds up to a factor of~$\sqrt{N}$. The modifications to each step are given below.

\medskip
\underline{\smash{Step 1: Rewriting}}\\
Since the summation over internal indices in 3-cumulants does not interfere with~\eqref{CR}, we restrict to rewriting the 2-cumulants that involve internal indices and obtain the desired form via~\eqref{rewrite-goal-generalized}. Recall that certain properties only hold for the pairs in~$\smash{\widetilde{C}(\pi)}$ such that, e.g., two edges adjacent to an edge with nonzero weight may still belong to the same~3-tuple. As before, we rename the remaining~$k'$ summation indices as~$b_1,\dots,b_{k'}$ and denote the matrices obtained along the rewriting procedure as $M^{(i)}$ with $i$ in some index set $J$. Recall that $\|M^{(i)}\|\leq CN^{w_i}$ where $w_i\neq0$ is the weight of the corresponding edge in the graph.

\medskip
\underline{\smash{Step 2: Sum-In}}\\
In this step, we focus on the terms specific to 3-cumulants that are not covered by the argument in Lemma~\ref{estimates2}. Consider the possible subgraphs in which an edge with nonzero weight is adjacent to two edges that belong to the same~3-tuple. Here, rewriting and estimating the term using the vectors in~\eqref{def-vector-t} fails similarly to the second part of Example~\ref{ex-intindices}. We visualize the respective subgraphs in Fig.~\ref{fig5} below, where edges belonging to the same~3-tuple are indicated by the same linestyle and~$w_j$~(resp.~$w_{j_1},w_{j_2}$) denotes some nonzero edge weight. For simplicity, edge weights that are equal to zero as well as the labels of most vertices are left out and the remainder of the graph is indicated by horizontal~dots.

\begin{figure}[H]
\begin{center}
$\underbrace{\begin{tikzpicture}[scale=1.25] 
\draw (-0.9239,0.9239) node[left=-7pt] {$w_j$};
\draw (0.3827,0.9239) node[above=1pt] {$b_l$};
\draw (-0.9239,-0.9239) node[left=-7pt] {\color{white}a\color{black}};
\draw (-0.3827,-0.9239) node[below=1pt] {\color{white}b\color{black}};
\draw (0.3827,0) node {$\dots$};
\draw[lightgray,very thick] (-0.3827,0.9239) -- (-0.9239,0.3827);
\draw[black] (-0.3827,0.9239) -- (0.3827,0.9239);
\draw[black] (-0.9239,0.3827) -- (-0.9239,-0.3827);
\draw[black] (-0.9239,-0.3827) -- (-0.3827,-0.9239);
\filldraw [black] (-0.3827,0.9239) circle (1pt)(-0.9239,0.3827) circle (1pt)(-0.9239,-0.3827) circle (1pt)(-0.3827,-0.9239) circle (1pt);
\filldraw [black] (0.3827,0.9239) circle (1pt);
\end{tikzpicture}\hspace{0.5cm}
\begin{tikzpicture}[scale=1.25] 
\draw (-0.9239,0) node[left=1pt] {$w_j$};
\draw (0.3827,0.9239) node[above=1pt] {$b_l$};
\draw (-0.9239,-0.9239) node[left=-7pt] {\color{white}a\color{black}};
\draw (-0.3827,-0.9239) node[below=1pt] {\color{white}b\color{black}};
\draw (0.3827,0) node {$\dots$};
\draw[black] (-0.3827,0.9239) -- (-0.9239,0.3827);
\draw[black] (-0.3827,0.9239) -- (0.3827,0.9239);
\draw[lightgray,very thick] (-0.9239,0.3827) -- (-0.9239,-0.3827);
\draw[black] (-0.9239,-0.3827) -- (-0.3827,-0.9239);
\filldraw [black] (-0.3827,0.9239) circle (1pt)(-0.9239,0.3827) circle (1pt)(-0.9239,-0.3827) circle (1pt)(-0.3827,-0.9239) circle (1pt);
\filldraw [black] (0.3827,0.9239) circle (1pt);
\end{tikzpicture}}_{\text{Type\ 1}}$\hspace{0.5cm}
$\underbrace{\begin{tikzpicture}[scale=1.25] 
\draw (-0.9239,-0.9239) node[left=-7pt] {$w_j$};
\draw (0.3827,0.9239) node[above=1pt] {$b_l$};
\draw (-0.9239,0.3827) node[left=1pt] {$b_m$};
\draw (-0.3827,-0.9239) node[below=1pt] {\color{white}b\color{black}};
\draw (-0.6,0.7) node[left=-5pt] {$\dots$};
\draw (0.3827,0) node {$\dots$};
\draw[lightgray,very thick] (-0.3827,-0.9239) -- (-0.9239,-0.3827);
\draw[black,very thick, dashed] (-0.9239,0.3827) -- (-0.9239,-0.3827);
\draw[black,very thick, dashed] (-0.3827,-0.9239) -- (0.3827,-0.9239);
\draw[black,very thick, dashed] (0.3827,0.9239) -- (-0.3827,0.9239);
\filldraw [black] (-0.3827,0.9239) circle (1pt)(-0.9239,0.3827) circle (1pt)(-0.9239,-0.3827) circle (1pt)(-0.3827,-0.9239) circle (1pt);
\filldraw [black] (0.3827,-0.9239) circle (1pt)(0.3827,0.9239) circle (1pt);
\end{tikzpicture}\hspace{0.5cm}
\begin{tikzpicture}[scale=1.25] 
\draw (-0.9239,0.9239) node[left=-7pt] {$w_j$};
\draw (0.3827,0.9239) node[above=1pt] {$b_l$};
\draw (-0.3827,-0.9239) node[below=1pt] {$b_m$};
\draw (-0.9239,-0.9239) node[left=-7pt] {\color{white}a\color{black}};
\draw (-0.6,-0.7) node[left=-5pt] {$\dots$};
\draw (0.3827,0) node {$\dots$};
\draw[lightgray,very thick] (-0.3827,0.9239) -- (-0.9239,0.3827);
\draw[black,very thick, dashed] (-0.9239,0.3827) -- (-0.9239,-0.3827);
\draw[black,very thick, dashed] (-0.3827,-0.9239) -- (0.3827,-0.9239);
\draw[black,very thick, dashed] (0.3827,0.9239) -- (-0.3827,0.9239);
\filldraw [black] (-0.3827,0.9239) circle (1pt)(-0.9239,0.3827) circle (1pt)(-0.9239,-0.3827) circle (1pt)(-0.3827,-0.9239) circle (1pt);
\filldraw [black] (0.3827,-0.9239) circle (1pt)(0.3827,0.9239) circle (1pt);
\end{tikzpicture}}_{\text{Type\ 2}}$\hspace{0.5cm}
$\underbrace{\begin{tikzpicture}[scale=1.25] 
\draw (-0.9239,0.9239) node[left=-7pt] {$w_{j_1}$};
\draw (-0.9239,-0.9239) node[left=-7pt] {$w_{j_2}$};
\draw (0.3827,0.9239) node[above=1pt] {$b_l$};
\draw (-0.3827,-0.9239) node[below=1pt] {\color{white}b\color{black}};
\draw (0.3827,0) node {$\dots$};
\draw[lightgray,very thick] (-0.3827,0.9239) -- (-0.9239,0.3827);
\draw[black,very thick, dotted] (-0.9239,0.3827) -- (-0.9239,-0.3827);
\draw[lightgray,very thick] (-0.9239,-0.3827) -- (-0.3827,-0.9239);
\draw[black,very thick, dotted] (-0.3827,-0.9239) -- (0.3827,-0.9239);
\draw[black,very thick, dotted] (0.3827,0.9239) -- (-0.3827,0.9239);
\filldraw [black] (-0.3827,0.9239) circle (1pt)(-0.9239,0.3827) circle (1pt)(-0.9239,-0.3827) circle (1pt)(-0.3827,-0.9239) circle (1pt);
\filldraw [black] (0.3827,-0.9239) circle (1pt)(0.3827,0.9239) circle (1pt);
\end{tikzpicture}}_{\text{Type\ 3}}$
\caption{The only subgraphs not covered by previous estimates.}\label{fig5}
\end{center}
\end{figure}

\vspace{-0.7cm}
We gather the 3-cumulant and the matrix element(s) corresponding to the respective subgraphs in Fig.~\ref{fig5} into one term that we refer to as type~1,~2 or~3, respectively. For example, the left type 1 graph translates to the product
\begin{displaymath}
M^{(j)}_{b_{l+1},b_{l+2}}\kap{3}(b_lb_{l+1},b_{l+2}b_{l+3},b_{l+3}b_{l+4})
\end{displaymath}
in the cumulant calculation while the left type 2 graph translates to
\begin{displaymath}
M^{(j)}_{b_{m+1},b_{m+2}}\kap{3}(b_lb_{l+1},b_{m}b_{m+1},b_{m+2}b_{m+3}).
\end{displaymath}
Here, $M^{(j)}$ denotes the matrix that corresponds to the edge with weight $w_j$, respectively. Note that the two solid and two dashed graphs in Fig.~\ref{fig5} look very similar, but are still not identical since the cyclic ordering breaks the symmetry. As a result of Lemma~\ref{rewriting-cumu}, any matrix element that does not occur in a term of type~1,~2 or~3 can be treated using the same sum-in procedure as in the proof of Lemma~\ref{estimates2}, i.e., by applying~\eqref{def-vector-t} and performing the summation over the corresponding $b_{l+1}$ explicitly. Again, defining the vectors by the rows of the respective matrices ensures that only the first index of every index pair in the remaining 2- and~3-cumulants may be replaced by a vector. In particular, estimates for the special cases in Fig.~\ref{fig5} and the counting rule are sufficient to cover any term that is encountered. As an example, consider two interlaced type 1 subgraphs: If the second type 1 term is added between the two solid edges of the first, the result can be estimated using only bounds for type 1 and 2, respectively. Otherwise, the sum-in procedure applies, as at least one edge of nonzero weight is no longer adjacent to two edges from the same grouping.

\medskip
At this point, any separate matrix element occurs in a term of type 1, 2 or~3. Note that also up to one~(types~1 and 3) or two (type 2) indices of the~3-cumulant in the respective terms may have been replaced by a vector along the sum-in procedure. 

\medskip
\underline{\smash{Step 3: Estimates}}\\
To estimate the remaining summations, we apply a recursive summation procedure similar to the proof of Lemma~\ref{estimates2}. Note that the summation over any number of indices of a 2- or~3-cumulant follows~\eqref{CR} by Lemmas~\ref{red-rule-2-cumu},~\ref{ext-red-rule-2-cumu},~\ref{red-rule-3-cumu}, and~\ref{ext-red-rule-3-cumu}, including summations over internal indices in the 3-cumulants and any summation over internal indices of 2-cumulants resulting in the sum-in of the vectors in~\eqref{def-vector-t}. The only terms not covered by the previous arguments are the terms of type 1,~2, and~3. We claim that they, too, follow~\eqref{CR} and show that applying the bounds from Lemma~\ref{red-rule-3-cumu} or~\ref{ext-red-rule-3-cumu} for the 3-cumulant compensates the additional factor(s) of~$\sqrt{N}$ obtained from estimating the matrix element(s) trivially by the norm of the respective matrix.

\medskip
Assume first that the summation on the left-hand side of~\eqref{k-cumu-term} reduces to one single term of type~1, with the internal index occurring, e.g., at $b_4$, or of type~3. We obtain from Lemma~\ref{red-rule-3-cumu} and~\eqref{dummy-ind} that
\begin{align*}
\sum_{b_1,\dots,b_5}|\kap{3}(b_1b_2,b_3b_4,b_4b_5)|\leq CN^2,\quad \sum_{b_1,\dots,b_6}|\kap{3}(b_1b_2,b_3b_4,b_5b_6)|\leq CN^2.
\end{align*}
Observe that these estimates are better than the bound prescribed by~\eqref{CR} by one or two factors of $\sqrt{N}$, respectively. Hence, the final bound for the term follows~\eqref{CR} again.

\medskip
Whenever a term of type~1,~2, or~3 is encountered along the recursive summation procedure, we argue similarly. Recall that Step ii used in the proof of Lemma~\ref{2-cumu-lemma} requires summing over all remaining indices for every 2-cumulant that is encountered. For terms of type~1,~2, or~3, we modify this step and sum up all but one (type~1 and~3) or two (type~2) indices instead, always leaving the index $b_l$ for the largest value of $l$ in each connected subgraph unsummed. The index or indices that are left out always belong to another 2- or~3-cumulant or term of type~1,~2, or~3 and are thus considered in later steps of the procedure. The summation rule is visualized in Fig.~\ref{fig6} below, where the large dots denote the indices that will be summed up after estimating the matrix element(s) trivially.

\begin{figure}[H]
\begin{center}
$\underbrace{\begin{tikzpicture}[scale=1.25] 
\draw (-0.9239,0.9239) node[left=-7pt] {$w_j$};
\draw (-0.9239,-0.9239) node[left=-7pt] {\color{white}a\color{black}};
\draw (0.3827,0) node {$\dots$};
\draw[lightgray,very thick] (-0.3827,0.9239) -- (-0.9239,0.3827);
\draw[black] (-0.3827,0.9239) -- (0.3827,0.9239);
\draw[black] (-0.9239,0.3827) -- (-0.9239,-0.3827);
\draw[black] (-0.9239,-0.3827) -- (-0.3827,-0.9239);
\filldraw [black] (-0.3827,0.9239) circle (2pt)(-0.9239,0.3827) circle (2pt)(-0.9239,-0.3827) circle (2pt)(-0.3827,-0.9239) circle (1pt);
\filldraw [black] (0.3827,0.9239) circle (2pt);
\end{tikzpicture}\hspace{0.5cm}
\begin{tikzpicture}[scale=1.25] 
\draw (-0.9239,0) node[left=1pt] {$w_j$};
\draw (-0.9239,-0.9239) node[left=-7pt] {\color{white}a\color{black}};
\draw (0.3827,0) node {$\dots$};
\draw[black] (-0.3827,0.9239) -- (-0.9239,0.3827);
\draw[black] (-0.3827,0.9239) -- (0.3827,0.9239);
\draw[lightgray,very thick] (-0.9239,0.3827) -- (-0.9239,-0.3827);
\draw[black] (-0.9239,-0.3827) -- (-0.3827,-0.9239);
\filldraw [black] (-0.3827,0.9239) circle (2pt)(-0.9239,0.3827) circle (2pt)(-0.9239,-0.3827) circle (2pt)(-0.3827,-0.9239) circle (1pt);
\filldraw [black] (0.3827,0.9239) circle (2pt);
\end{tikzpicture}}_{\text{Type\ 1}}$\hspace{0.5cm}
$\underbrace{\begin{tikzpicture}[scale=1.25] 
\draw (-0.9239,-0.9239) node[left=-7pt] {$w_j$};
\draw (-0.6,0.7) node[left=-5pt] {$\dots$};
\draw (0.3827,0) node {$\dots$};
\draw[lightgray,very thick] (-0.3827,-0.9239) -- (-0.9239,-0.3827);
\draw[black,very thick,dashed] (-0.9239,0.3827) -- (-0.9239,-0.3827);
\draw[black,very thick,dashed] (-0.3827,-0.9239) -- (0.3827,-0.9239);
\draw[black,very thick,dashed] (0.3827,0.9239) -- (-0.3827,0.9239);
\filldraw [black] (-0.3827,0.9239) circle (1pt)(-0.9239,0.3827) circle (2pt)(-0.9239,-0.3827) circle (2pt)(-0.3827,-0.9239) circle (2pt);
\filldraw [black] (0.3827,-0.9239) circle (1pt)(0.3827,0.9239) circle (2pt);
\end{tikzpicture}\hspace{0.5cm}
\begin{tikzpicture}[scale=1.25] 
\draw (-0.9239,0.9239) node[left=-7pt] {$w_j$};
\draw (-0.9239,-0.9239) node[left=-7pt] {\color{white}a\color{black}};
\draw (-0.6,-0.7) node[left=-5pt] {$\dots$};
\draw (0.3827,0) node {$\dots$};
\draw[lightgray,very thick] (-0.3827,0.9239) -- (-0.9239,0.3827);
\draw[black,very thick,dashed] (-0.9239,0.3827) -- (-0.9239,-0.3827);
\draw[black,very thick,dashed] (-0.3827,-0.9239) -- (0.3827,-0.9239);
\draw[black,very thick,dashed] (0.3827,0.9239) -- (-0.3827,0.9239);
\filldraw [black] (-0.3827,0.9239) circle (2pt)(-0.9239,0.3827) circle (2pt)(-0.9239,-0.3827) circle (1pt)(-0.3827,-0.9239) circle (2pt);
\filldraw [black] (0.3827,-0.9239) circle (1pt)(0.3827,0.9239) circle (2pt);
\end{tikzpicture}}_{\text{Type\ 2}}$\hspace{0.5cm}
$\underbrace{\begin{tikzpicture}[scale=1.25] 
\draw (-0.9239,0.9239) node[left=-7pt] {$w_{j_1}$};
\draw (-0.9239,-0.9239) node[left=-7pt] {$w_{j_2}$};
\draw (0.3827,0) node {$\dots$};
\draw[lightgray,very thick] (-0.3827,0.9239) -- (-0.9239,0.3827);
\draw[black,very thick,dotted] (-0.9239,0.3827) -- (-0.9239,-0.3827);
\draw[lightgray,very thick] (-0.9239,-0.3827) -- (-0.3827,-0.9239);
\draw[black,very thick,dotted] (-0.3827,-0.9239) -- (0.3827,-0.9239);
\draw[black,very thick,dotted] (0.3827,0.9239) -- (-0.3827,0.9239);
\filldraw [black] (-0.3827,0.9239) circle (2pt)(-0.9239,0.3827) circle (2pt)(-0.9239,-0.3827) circle (2pt)(-0.3827,-0.9239) circle (2pt);
\filldraw [black] (0.3827,-0.9239) circle (1pt)(0.3827,0.9239) circle (2pt);
\end{tikzpicture}}_{\text{Type\ 3}}$
\caption{The summation rule.}\label{fig6}
\end{center}
\end{figure}

\vspace{-0.7cm}
For the terms of type 1 or~3, leaving one index of the respective~3-cumulant unsummed always allows for a bound of order $N$ by Lemma~\ref{red-rule-3-cumu} or~\eqref{eq-err3cp-2}, i.e., at least $(\sqrt{N})^2$ better than~\eqref{CR} would give, which compensates for the additional factor of~$\sqrt{N}$ or~$N$, respectively, from estimating the matrix elements. The same holds for terms of type 2. Note, however, that due to the dashed subgraphs in Fig.~\ref{fig6} having two connected components, we may also encounter the case that only two summations remain. Here, we apply~\eqref{eq-rr3c-1},~\eqref{eq-err3c-4} or~\eqref{eq-err3c-6} whenever the 3-cumulant involves zero, one or two vectors, respectively. This allows to estimate the remaining two summations by a bound of at most order $N^{1/2}$, showing that~\eqref{CR} holds again. Hence, we can apply a similar recursive summation procedure as used in the proof of Lemma~\ref{2-cumu-lemma} given by the following modified algorithm.

\begin{itemize}
\item[Step i] Identify the terms of type 1, 2 or 3 in the summation. We distinguish two cases.
\begin{itemize}
\item  Whenever terms of type 1, 2 or 3 are present, identify the summation index $b_l$ in them that corresponds to the smallest subscript $l$ .
\item Whenever no terms of type 1, 2 or 3 are present, carry out Step i as in the proof of Lemma~\ref{estimates2}, possibly choosing an index occuring in a~3-cumulant.
\end{itemize}
\item[Step ii] We distinguish two cases depending on the outcome of Step i.
\begin{itemize}
\item Whenever the index $b_l$ occurs in a term of type~1,~2 or~3, estimate the corresponding matrix element(s) using the bound for the matrix norm, then isolate the corresponding~3-cumulant and perform the summation according to Fig.~\ref{fig6}.
\item Whenever the index $b_l$ occurs in a 2- or 3-cumulant, carry out Step~ii as in the proof of Lemma~\ref{estimates2}, possibly applying Lemma~\ref{red-rule-3-cumu} or~\ref{ext-red-rule-3-cumu} if $b_l$ occurs in a 3-cumulant.
\end{itemize}
\item[Step iii] Identify the index $b_{l'}$ in the remaining sum that corresponds to the smallest subscript~$l'$ and carry out Step ii until all summations have been evaluated.
\end{itemize}

Note that starting the summation with estimating a matrix element breaks the cyclic structure of the graph similarly to~\eqref{start-sum}. Further, we have shown that all estimates obtained in Step ii follow~\eqref{CR}. The final bound for the sum obtained at the end of the procedure is thus at most of order $N^{k/2+1/2}$, which is the claim.
\end{proof}

\vspace{-4mm}
\subsection{Including Cumulants of Order Four and Higher}
With the necessary tools established, we proceed to estimating the summation~\eqref{k-cumu-term} in the general case, i.e., when we have cumulants (tuples) of arbitrary order.

\begin{proof}[Proof of~\eqref{k-cumu-term}]
Let $k\in\N$ and $\pi\in\Pi_k$ be arbitrary. Excluding the terms already considered, we can assume that $k\geq4$ and that the partition includes at least one set of four or more elements. As before, we start by identifying any 2-cumulants with internal indices on the right-hand side of~\eqref{k-cumu-term} and rewriting the term using~\eqref{rewrite-goal-generalized}. Next, we carry out the sum-in procedure from Lemma~\ref{2-3-cumu-bound} almost unchanged, i.e.,~\eqref{def-vector-t} is applied for every matrix element that can be incorporated into a 2- or 3-cumulant and the matrix elements occurring in a term of type~1,~2 or 3 (see Fig.~\ref{fig5}) are gathered together with the corresponding 3-cumulant. Note that this step might leave some matrix elements unassigned. We assume first that~$\pi$ is chosen such that this is not the case.

\medskip
Recall that the counting rule established for 2- and 3-cumulants prescribes that every individual summation yields a factor of~$\sqrt{N}$, while the estimate for higher-order cumulants yields a contribution of $N^2$ when summing over all indices in the respective cumulant. To apply both rules simultaneously, we divide the terms obtained from the rewriting procedure into two factors $F$ and $G$, where $G$ collects the 2- and 3-cumulants, as well as the terms of type 1, 2 and 3, and the additional factors of $N$ obtained from the $N^{-j}$ normalization in~\eqref{def-vector-t}, while the higher-order cumulants constitute $F$. Whenever no rewriting is required, we can split the left-hand side of~\eqref{k-cumu-term} directly into
\begin{align*}
F(b_1,\dots,b_k)&:=\prod_{\substack{B\in\pi\\|B|\geq4}}|\kap{|B|}(b_jb_{j+1}|j\in B)|,\quad G(b_1,\dots,b_k):=\prod_{\substack{B\in\pi\\2\leq|B|\leq3}}|\kap{|B|}(b_jb_{j+1}|j\in B)|.
\end{align*}
Next, consider the set $X:=\{j: b_j\text{ appears in a cumulant that belongs to }F\}$. Abbreviating~$b_X=\{b_j: j\in X\}$ and $b_{X^c}=\{b_1,\dots,b_{k'}\}\setminus b_X$, it follows that
\begin{align*}
\sum_{b_1,\dots,b_{k'}}F(b_X)G(b_1,\dots,b_{k'})&\leq\Big(\sum_{b_X}F(b_X)\Big)\Big(\max_{b_X}\sum_{b_{X^c}}G(b_1,\dots,b_{k'})\Big)
\end{align*}
by taking the maximum over all indices in $b_X$ in the factors that occur in $G$.

\medskip
Let $n\geq1$ be the number of factors in~$F$. Since every cumulant included in $F$ is of order four or higher, each factor involves at least eight indices as arguments. As every index belongs to two edges and, hence, has to appear exactly twice in the product of~$F$ and $G$, there are at least $4n$ distinct indices in~$F$. By definition, the total number of distinct indices occurring in~$F$ is equal to~$|X|$ such that $|X|\geq 4n$. Introducing additional summation labels $b_1',\dots,b_l'$ to sum over all indices involved in the respective factors and applying Lemma~\ref{est-high-cumu} thus implies
\begin{align}\label{bound-part1}
\sum_{b_X}F(b_X)\leq\sum_{b_X,b_1',\dots,b_l'}F(b_X,b_1',\dots,b_l')\leq C(N^2)^n\leq CN^{|X|/2}.
\end{align}
For the summation involving $G$, we follow Step 3 from the proof of Lemma~\ref{2-3-cumu-bound}. This yields
\begin{align}\label{bound-part2}
\max_{b_X}\sum_{b_X^c}G(b_1,\dots,b_{k'})\leq CN^{(k-|X|)/2+1/2}
\end{align}
from applying the recursive summation procedure, as all terms in the sum follow~\eqref{CR}. Hence, we obtain a contribution of order~$\sqrt{N}$ for every summation, including the ones that were carried out along the rewriting and sum-in step, and possibly an additional factor $\sqrt{N}$ from an estimate similar to~\eqref{start-sum}. Combining~\eqref{bound-part1} and~\eqref{bound-part2} thus yields
\begin{align*}
N^{-k/2-1}\Big(\sum_{b_X}F(b_X)\Big)\Big(\max_{b_X}\sum_{b_X^c}G(b_1,\dots,b_{k'})\Big)\leq CN^{-1/2}.
\end{align*}

\medskip
In the previously excluded cases, there either is a term similar to the terms of type~1,~2 or~3 with the 3-cumulant replaced by a cumulant of order four or higher, or a matrix element for which introducing vectors as in~\eqref{def-vector-t} would require summing it into a higher-order cumulant. In both cases, we estimate the matrix element trivially by the norm of the corresponding matrix, which yields a factor of~$\sqrt{N}$ more than prescribed by~\eqref{CR}. However, the occurrence of the matrix element implies that the $j$-cumulant must involve at least $j+1$ distinct indices. Hence, one more summation than in the minimal case is carried out when the summation over all indices in the cumulant is evaluated, i.e., the additional factor~$\sqrt{N}$ is balanced out. This implies that the estimates again follow~\eqref{CR}, giving the claim in the general case. In particular, we obtain a sub-leading contribution to~\eqref{cumu-expansion} whenever the term on the left-hand side of~\eqref{k-cumu-term} involves cumulants of order four or higher.
\end{proof}

\vspace{-4mm}
\renewcommand*{\bibname}{References}
\addcontentsline{toc}{chapter}{References}
\bibliographystyle{plain}
\bibliography{References}
\end{document}